\newcommand{\half}{\frac{1}{2}}
\newcommand{\reals}{\mathbb{R}}
\newcommand{\nex}{\bm{x}}
\newcommand{\ney}{\bm{y}}
\newcommand{\norm}[1]{\left\Vert #1\right\Vert}
\newcommand{\jump}[1]{\llbracket {#1}  \rrbracket}
\newcommand{\av}[1]{\left\{\!\!\left\{   #1 \right\}\!\!\right\} }
\newcommand{\K}{\mathsf{K}}
\newcommand{\V}{\mathsf{V}}
\newcommand{\W}{\mathsf{W}}
\newcommand{\A}{\mathsf{A}}
\newcommand{\X}{\mathsf{X}}
\renewcommand{\S}{\mathsf{S}}
\newcommand{\D}{\mathsf{D}}
\newcommand{\R}{\mathsf{R}}
\newcommand{\E}{\mathsf{E}}
\newcommand{\IR}{\mathbb{R}}
\renewcommand{\r}{|\bm{x} - \bm{y}|}
\newcommand{\dbtilde}[1]{\widetilde{\raisebox{0pt}[0.85\height]{$\widetilde{#1}$}}}
\newtheorem{theorem}{Theorem}[section]
\newtheorem{remark}[theorem]{Remark}
\newtheorem{problem}[theorem]{Problem}
\begin{document}

\title{Time-Domain Multiple Traces Boundary Integral Formulation for Acoustic Wave Scattering in 2D}
\author{Carlos Jerez-Hanckes\thanks{(carlos.jerez@uai.cl), Faculty of Engineering and Sciences, Universidad Adolfo Ib\'a\~nez, Santiago, Chile}
	\and Ignacio Labarca\thanks{(ignacio.labarca@sam.math.ethz.ch), Seminar for Applied Mathematics, Swiss Federal Institute of Technology, Zurich, Switzerland
}}

\maketitle

\begin{abstract}
We present a novel computational scheme to solve acoustic wave transmission problems over two-dimensional composite scatterers, i.e.~penetrable obstacles possessing junctions or triple points. The continuous problem is cast as a multiple traces time-domain boundary integral formulation. For discretization in time and space, we resort to convolution quadrature schemes coupled to a non-conforming spatial spectral discretization based on second kind Chebyshev polynomials displaying fast convergence. Computational experiments confirm convergence of multistep and multistage convolution quadrature for a variety of complex domains.

{\textbf{Keywords: }} acoustic wave scattering, wave transmission problems, convolution quadrature, time-domain boundary integral operators, multiple traces formulation
\end{abstract}

\section{Introduction}
\label{sec:intro}

We are interested in solving acoustic wave transmission problems arising from the scattering by composite objects in two dimensions. More precisely, we consider a bounded Lipschitz domain $ \Omega\subset\IR^2$, composed of $ M $ non-overlapping Lipschitz subdomains  $ \Omega_i$, $i = 1, \ldots, M,$ such that
\begin{equation}\label{eq:subdomains}
 \overline{\Omega} = \bigcup_{i = 1}^{M} \overline{\Omega}_{i},
 \end{equation}
where $ \Omega_i \cap \Omega_j = \emptyset $ for $ i\neq j $. We define $ \Gamma_{ij}:= \partial \Omega_i \cap \partial \Omega_j $ for the interface between domains $ \Omega_i $ and $ \Omega_j $, with $\Gamma_{ij}=\Gamma_{ji}$. We also denote by $ \Omega_0 := \mathbb{R}^d \ \backslash \  \overline{\Omega} $ the unbounded exterior domain. Notice that, for $i = 0, \ldots, M,$ one can write 
$$ \Gamma_i := \partial \Omega_i = \bigcup_{j \in \Upsilon_i} \Gamma_{ij}, $$
where $ \Upsilon_i $ corresponds to an index set defined as
$$ \Upsilon_i := \left\{  j \in \{0,\ldots, M\} : \quad j \neq i \text{ and } \Gamma_{ij} \neq \emptyset \right\}.$$
The above composite material is characterized by piecewise-constant coefficients, $c_{i} > 0 $ corresponding to the wavespeed on the domain $ \Omega_i$, $i=0,\ldots,M$. Assuming some incoming wave with no volume sources denoted $u^\text{inc}$, we set $u_{i}:= u|_{\Omega_{i}} $ for the total wave inside $\Omega_i$, for $i=1,\ldots,M$, and by $u_0:= u|_{\Omega_{0}} $ the scattered wave in the exterior domain. With this, we seek to solve the time-domain acoustic transmission problem:
\begin{equation}\label{eq:wave-eqn}
\left\{ 
\begin{array}{ll}
\dfrac{1}{c_{i}^2} \dfrac{\partial^2 u_{i}}{\partial t^2} - \Delta u_{i} = 0 & \text{ in }\Omega_{i} \times [0, \infty), \quad i = 0, \ldots, M,\\
&\\
 u_0 - u_j = -u^{\text{inc}} & \text{ on }\Gamma_{0j} \times [0, \infty), \ j\in \Upsilon_0,\\
&\\
 \dfrac{\partial u_0}{\partial n^0} +   \dfrac{\partial u_j}{\partial n^j} = -\dfrac{\partial u^{\text{inc}}}{\partial n^0} & \text{ on }\Gamma_{0j} \times [0, \infty),  \ j\in \Upsilon_0,\\
&\\
 u_i - u_j = 0 &\text{ on } \Gamma_{ij} \times (0, \infty],\ i,j>0, \ j \in \Upsilon_i,\\
 &\\
 \dfrac{\partial u_i}{\partial n^i} +  \dfrac{\partial u_j}{\partial n^j} = 0& \text{ on } \Gamma_{ij} \times [0, \infty), \ i,j>0,\ j \in \Upsilon_i,\\
 &\\
 u_i = \dfrac{\partial u_{i}}{\partial t} = 0 & \text{ in }\Omega_i \times \{0\}, i = 0, \ldots, M.
\end{array}
\right.
\end{equation}
\begin{figure}[t]
	\centering 
	\includegraphics[width=0.6\linewidth]{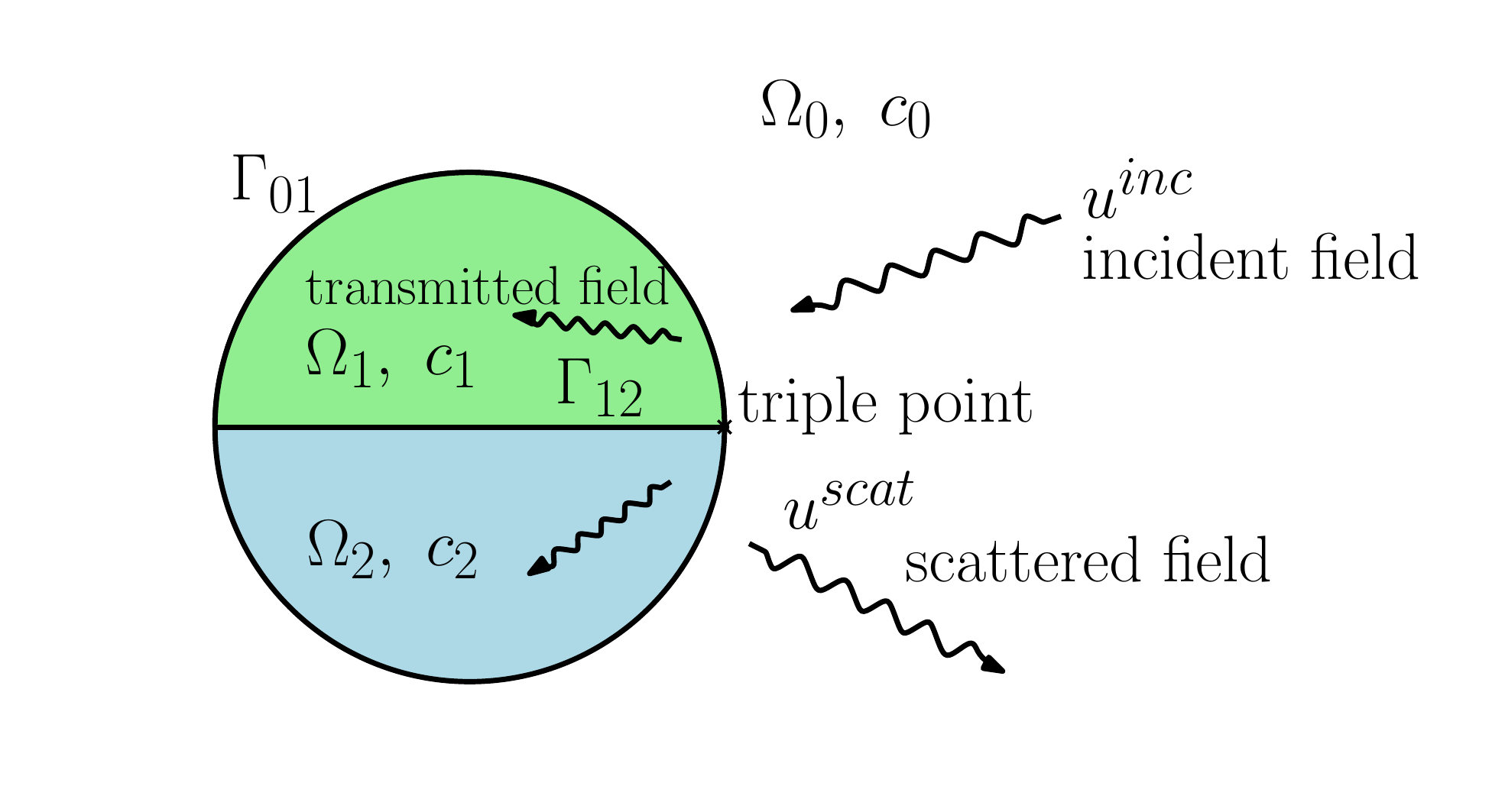}
	\label{geom}
	\caption{Example of a scatterer with two homogenous subdomains.}
\end{figure}
We solve the above wave scattering problem by combining the following approaches:
\begin{enumerate}[(i)]
	\item Boundary integral equations (BIEs) in the form of the local multiple traces formulation (MTF) in space-time domain;
	\item Spectral non-conforming Galerkin discretizations with Chebyshev polynomials for spatial discretization of the BIEs; and,
	\item Convolution quadrature (CQ) for approximation in time.
\end{enumerate}

BIEs lead to unknowns defined on boundaries while rigorously enforcing causality or radiation conditions in the time-harmonic case \cite{mclean2000strongly}. Specifically, for composite materials one can resort to BIEs based on single-trace formulations or different versions of MTFs according to the strong or weak enforcement of transmission conditions (\emph{cf.}~\cite{hiptmair2012multiple,CHJ13,CHJ15}). The discretization of MTFs then may be carried out via boundary elements (BE) \cite{hiptmair2012multiple,HJA16}, spectral Galerkin \cite{jerez2015local,jerez2016local,HJH18} or Nystr\"om \cite{jerez2017multitrace} methods. 

In the following, we employ a spatial spectral non-conforming discrete Galerkin scheme for the (local) MTF based on Chebyshev polynomials in order to attain accurate approximations with a small number of degrees of freedom. Moreover, this setting allows for the efficient computation of matrix entries via the Fast Fourier Transform (FFT) by relating Chebyshev coefficients to Fourier coefficients as well as direct implementation of compression techniques (cf.~\cite[Section~3.3]{jerez2015local}, \cite{jerez2016local} and \cite[Chapter~3]{trefethen2013approximation}). First and second kind single-trace formulations \cite{spindler2015second,EFH19} also exist but there are no available spectrally convergent methods for them. Still, due to the non-conforming nature of our spectral scheme, its numerical analysis remains open an open problem as existing results require Galerkin discretization in standard Sobolev spaces. Thus, the present work focuses on algorithmic aspects. Indeed, though we introduce the required functional spaces for rigorously formulating the MTF, these are not essential for neither the current presentation nor its implementation. For a precise derivation and results of existence and uniqueness of the continuous problem we refer to \cite{hiptmair2012multiple}.

For the time-domain approximation we opt for CQ methods \cite{banjai2012wave,sayas2016retarded} due to their stability and amenability to be coupled to any available complex frequency-domain solvers such as the ones discussed above. Algorithmically, we refer to \cite{hassell2016convolution} for the corresponding pseudo-code to efficiently compute forward convolutions and solve the arising equations. A thorough analysis of multistage CQ can be found in \cite{banjai2011error, banjai2011runge, banjai2012runge, lubichRK}, while generalized CQ allowing for different timesteps is described in \cite{lopez2013generalized, lopez2016generalized}. CQ implementation and analysis for wave scattering problems have been studied in \cite{banjai2008rapid}, with emphasis on transmission problems provided in \cite{qiu2016time,qiu2016costabel} and composite materials in \cite{rieder2017convolution,EFH19}. Yet, to our knowledge, no time-domain CQ-MTF has been described in the literature, with the current work being the first contribution to that end.

Alternatively, direct Galerkin discretizations for space-time BIEs could also be used to tackle Problem \eqref{eq:wave-eqn}, but this involves the difficult computation of boundary integral operators in time domain \cite{banjai2012wave}. Moreover, as it happens in most time-stepping procedures, a poor choice of time-step may immediately lead to instabilities. Also, long-time computations are often unstable and despite the availability of several remedies (cf.~\cite{davies1998stability,davies1997averaging,davies2004stability,davies2005stability}), CQ remains a simple, efficient and stable method to discretize time-domain problems without major complications. 

The present manuscript is structured as follows. In Section \ref{sec:MTF}, we present the MTF in frequency domain as in \cite{hiptmair2012multiple}. Section \ref{sec:TDBIE} recalls the main ideas behind time-domain BIEs. Section \ref{sec:numscheme} describes the spatial spectral non-conforming Galerkin discretization for MTF \cite{jerez2015local, jerez2016local}. Then, in Sections \ref{sec:multistep} and \ref{sec:multistage} we explain CQ for multistep and multistage linear methods and their application to convolutional equations in Section \ref{sec:conv_eq}. Finally, in Section \ref{sec:numexp} we show some numerical experiments for different geometries and parameters of interest, which clearly illustrate the capabilities and limitations of the proposed methodology and sketch future research ideas. 

Besides the definitions already introduced, the notation used throughout can be summarized as follows:
\begin{itemize}
\item The imaginary unit is denoted $\imath =\sqrt{-1}$.
	\item $ u $ denotes an acoustic field in space and time (see \eqref{eq:wave-eqn}).
	
	\item $ U $ and $ V $ denote Laplace-domain fields related to the modified Helmholtz equation (see Section \ref{sec:MTF}).
	
	\item $ \lambda, \varphi, \psi $ denote time-domain boundary densities, with values in Sobolev spaces defined over a boundary or interface (see Section \ref{sec:TDBIE}, \eqref{eq:tdpotentials} and Problem \ref{TDMTF}).
	
	\item $ \Lambda, \Phi, \Psi $ are boundary functions in frequency domain (see Section \ref{sec:MTF}, \eqref{eq:duality_prod}, \eqref{eq:potentials}).
	
	\item Bold variables $ \bm{\lambda}, \ \bm{\varphi}, \ \bm{g}, \ \bm{\Lambda}, \ \bm{\Phi}, \ \bm{G} $ denote elements of Cartesian product spaces (see Problems \ref{MTF_freq}, \ref{TDMTF}, \ref{specMTF}, \ref{CQMTF} and \ref{discreteTDMTF}).  
	
	\item Operators $ \A, \S, \D,  \K, \K', \V, \W, \X, \E, \R,  \mathsf{L} $ are defined in the frequency domain, sometimes depending on a complex parameter $ s\in\mathbb{C}_+ $ (see Section \ref{sec:MTF}, \eqref{eq:potentials}, \eqref{eq:bios}, \eqref{eq:cald_operator}).
	
	\item Operators $ \mathcal{A}, \mathcal{S}, \mathcal{D}, \mathcal{K}, \mathcal{K}', \mathcal{V}, \mathcal{W}, \mathcal{F} $ are defined in the time domain by means of the inverse Laplace transform of frequency domain counterparts (see Section \ref{sec:TDBIE}, \eqref{eq:tdpotentials}, \eqref{eq:td_op}, \eqref{eq:cald_op_td} and Problem \ref{TDMTF}).
		
	\item $ N $ is the number of timesteps (see Problem \ref{CQMTF}).
	
	\item $ L $ refers to the number of degrees of freedom of the spectral spatial discretization (see Section \ref{sec:spectral}, Problem \eqref{specMTF}).
	
	\item Variable $ s \in \mathbb{C}_+$ is reserved for the parameter in Laplace domain and modified Helmholtz equations (see Section \ref{sec:MTF}, \eqref{eq:modHem}, \eqref{eq:potentials}, \eqref{eq:representation_formula}, \eqref{eq:bios}).
	
\end{itemize}

\section{Multiple traces formulation}
\label{sec:MTF}
We recall results for the (local) MTF for Helmholtz scattering problems over composite materials as exposed in \cite{hiptmair2012multiple, jerez2016local}. Despite being initially developed for time-harmonic problems, the formulation lends itself easily to account for time-dependent wave scattering via the inverse Laplace transform as discussed in Section \ref{sec:TDBIE}.

Let $\Omega\subset \mathbb{R}^2 $ be as in \eqref{eq:subdomains} and $ \bm{n}^{i}$ denote the unit outward normal vector to $ \partial\Omega_i$. Trace spaces on the boundary are written $ H^{\pm 1/2}(\partial\Omega_{i}) $. For an open interface $ \Gamma_{ij}$, we introduce
$$ H^{\frac{1}{2}}(\Gamma_{ij}) := \{\xi|_{\Gamma_{ij}} \ : \ \xi\in H^{\frac{1}{2}}(\partial\Omega_i)   \} \quad  \text{ and } \quad \widetilde{H}^{\frac{1}{2}}(\Gamma_{ij}) := \{ \eta \in H^{\frac{1}{2}}(\Gamma_{ij}) \ : \ \widetilde{\eta}\in H^{\frac{1}{2}}(\partial\Omega_i)    \}, $$
where $ \tilde{\eta} $ is the extension by zero of $ \eta $ over $ \partial\Omega_i $. We identify $ \widetilde{H}^{-\frac{1}{2}}(\Gamma_{ij}) $ with the dual space of $   H^{\frac{1}{2}}(\Gamma_{ij})$. Cartesian product trace spaces over closed boundaries $ \partial \Omega_{i} $ are denoted
$$ \mathbf{V}_{i} := H^{\half}(\partial \Omega_{i}) \times H^{-\half}(\partial \Omega_{i}). $$
For the multiple interfaces case we require the following piecewise or broken spaces:
\begin{equation}\label{eq:tildepw}
\widetilde{H}_\text{pw}^{\mp \half}(\partial \Omega_i):= \left\{ V\in H^{\mp \half}(\partial \Omega_i)\ : \ V|_{\Gamma_{ij}} \in \widetilde{H}^{\mp \half}(\Gamma_{ij}), \quad \forall j\in \Upsilon_i   \right\},
\end{equation}
whose respective duals are
\begin{equation}\label{eq:pw}
H_\text{pw}^{\pm \half}(\partial \Omega_i):= \left\{ V\in \mathcal{D}^\prime(\partial \Omega_i)\ : \ V|_{\Gamma_{ij}} \in H^{\pm \half}(\Gamma_{ij}), \quad \forall j\in \Upsilon_i   \right\},
\end{equation}
where $\mathcal{D}^\prime(\partial \Omega_i)$ denotes the space of distributions over $\partial \Omega_i$. With \eqref{eq:tildepw} and \eqref{eq:pw} we define the broken Cartesian product spaces:
\begin{equation*}
\mathbf{V}_{\text{pw}, i}:= H^{\half}_\text{pw}(\partial \Omega_i) \times H^{-\half}_\text{pw}(\partial \Omega_i) \quad \text{ and }\quad 
\dbtilde{\mathbf{V}}_{i}:= \widetilde{H}^{\half}_\text{pw}(\partial \Omega_i) \times \widetilde{H}^{-\half}_\text{pw}(\partial \Omega_i).
\end{equation*}
Furthermore, we build the next spaces defined over $M\in\mathbb{N}$ subdomain boundaries: 
\begin{equation*}
\begin{array}{rcl} 
\dbtilde{\mathbb{V}}_{M} &:=& \dbtilde{\mathbf{V}}_0 \times \ldots \times \dbtilde{\mathbf{V}}_{M},\\
\mathbb{V}_{M} &:=& \mathbf{V}_0 \times \ldots \times \mathbf{V}_{M},\\
\mathbb{V}_{\text{pw},M} &:=& \mathbf{V}_{\text{pw},0} \times \ldots \times \mathbf{V}_{\text{pw},M}.
\end{array}
\end{equation*}
We denote by $ \gamma_d ^i $ and $ \gamma_n^i $ the standard Dirichlet and Neumann trace operators taken from the interior of $ \Omega_i $ and set the vector trace operator $ \gamma^i V := 
(\gamma_d^i V, \gamma_n^i V)$. Exterior counterparts are denoted $ \gamma_d^{i,c}, \gamma_n^{i,c}$ and $ \gamma^{i,c}$. Neumann traces $ \gamma_n^{i,c} $ involve the unit normal pointing towards the interior of $ \Omega^{i,c} $, allowing us to introduce average and jump trace operators over $ \partial \Omega_i $:
\begin{equation}
\jump{\gamma V}_{i} := \begin{pmatrix}
\jump{\gamma_d V}_{i} \\ \jump{\gamma_n V}_{i }
\end{pmatrix} = \begin{pmatrix}
\gamma_d^{i,c}V - \gamma_d^iV \\
\gamma_n^{i,c}V - \gamma_n^iV 
\end{pmatrix}
\ \ \text{and}\ \
\av{\gamma V}_{i} := \begin{pmatrix}
\av{\gamma_d V}_{i} \\ \av{\gamma_n V}_{i }
\end{pmatrix} = \dfrac{1}{2} \begin{pmatrix}
\gamma_d^{i,c}V + \gamma_d^iV \\
\gamma_n^{i,c}V + \gamma_n^iV 
\end{pmatrix},
\end{equation}
Let $ G_{i}(\r; s) = \frac{i}{4}H^{(1)}_0(\imath s_{i}\r) $ denote the fundamental solution of the homogenous modified Helmholtz equation 
\begin{equation}\label{eq:modHem}
 -\Delta U + s_{i}^2 U = 0 \quad \text{in}\  \mathbb{R}^2 \backslash \partial\Omega_{i} , 
\end{equation} 
 where $ s_{i} = sc^{-1}_{i}, \ s\in \mathbb{C}_+ := \{ s\in \mathbb{C} \ : \  \text{Re}(s) > 0\} $, and $ H^{(1)}_0(\cdot)$ denotes the first kind Hankel function of order zero. We define layer potentials acting on sufficiently smooth densities $ \Psi $ and $ \Phi $ on a boundary $ \Gamma_{i} =\partial \Omega_{i} $ as
\begin{equation}
\label{eq:potentials}
\begin{split}
(\S_{i}(s)\Psi)(\bm{x}) \ &:= \displaystyle \int_{\Gamma_{i}}G_{i}(\|\bm{x}-\bm{y}\|; s)\Psi(\bm{y}) \text{d}s_{y},\\ 
\quad (\D_{i}(s)\Phi)(\bm{x}) \ &:= \displaystyle \int_{\Gamma_{i}} \bm{n}^{i}(\bm{y})  \cdot \nabla_{\bm{y}}G_{i}(\|\bm{x}-\bm{y}\|; s)    \Phi(\bm{y}) \text{d}s_{y}.
\end{split}
\end{equation}
The integral representation formula for a solution $ U $ of \eqref{eq:modHem} in $ \mathbb{R}^2 \ \backslash \ \partial \Omega_{i} $ yields
\begin{equation}
\label{eq:representation_formula}
U(\bm{x}) =  (\D_{i}(s)\jump{\gamma_d U}_{i})(\bm{x}) - (\S_{i}(s)\jump{\gamma_n U}_{i})(\bm{x}), \quad \bm{x} \in \mathbb{R}^2 \ \backslash \ \partial \Omega_{i}.
\end{equation}
For each subdomain $ \Omega_{i},$ we introduce corresponding boundary integral operators (BIOs): 
	\begin{equation}\label{eq:bios}
	\begin{array}{lllclcl} 
	\V_{i}(s)&:=& \av{\gamma_d \S_{i}(s)}_{{i}} & : &  H^{-\half}(\partial \Omega_{i}) &\rightarrow& H^{\half}(\partial \Omega_{i}),\\
	\K'_{i}(s)&:=& \av{\gamma_n \S_{i}(s)}_{{i}} & : & H^{-\half}(\partial \Omega_{i}) &\rightarrow& H^{-\half}(\partial \Omega_{i}),\\
	\K_{i}(s)&:=& \av{\gamma_d \D_{i}(s)}_{{i}} & : & H^{\half}(\partial \Omega_{i}) &\rightarrow& H^{\half}(\partial \Omega_{i}),\\
	\W_{i}(s)&:=-&\av{\gamma_n \D_{i}(s)}_{{i}} & : & H^{\half}(\partial \Omega_{i}) &\rightarrow& H^{-\half}(\partial \Omega_{i}).
	\end{array}
	\end{equation}
and the Calder\'on block operator:
\begin{equation}\label{eq:cald_operator}
\A_{i}(s) := \begin{pmatrix}
-\K_{i}(s) & \V_{i}(s) \\ \ \ \W_{i}(s) & \K'_{i}(s)
\end{pmatrix} \ : \ \mathbf{V}_{i} \rightarrow \mathbf{V}_{i}. 
\end{equation}
\begin{remark}
A coercivity result for a scaled version of $ \A_{i}(s) $ can be found in \cite[Proposition~3.2]{EFH19}.
\end{remark}

For $ \bm{\Lambda} = (\Lambda_0, \ldots, \Lambda_M)\in \mathbb{V}_M, \ \bm{\Phi}= (\Phi_0, \ldots, \Phi_M)\in \dbtilde{\mathbb{V}}_M $ we denote $ (\bm{\Lambda}, \bm{\Phi})_{\times} := \displaystyle \sum_{i  = 0}^M (\Lambda_{i}, \Phi_{i})_{\times, i}, $ with the sesquilinear duality product:
\begin{equation}\label{eq:duality_prod}
\left(   {\Lambda}_{i}, {\Phi}_{i}\right)_{\times, i} = \langle \Lambda_{i, d}, \overline{\Phi}_{i, n} \rangle_{\Gamma_i} + \langle \overline{\Phi}_{i,d}, \Lambda_{i,n} \rangle_{\Gamma_i}
\end{equation} for $ {\Lambda}_{i}, {\Phi}_{i} \in \mathbf{V}_{i}$, $ {\Lambda_{i}} = (\Lambda_{i,d}, \Lambda_{i,n}), $ and $ {\Phi_{i}}= (\Phi_{i,d}, \Phi_{i,n}) $.\\
Now, we are able to present the complex frequency-domain MTF.
\begin{problem}[Multiple traces formulation]\label{MTF_freq}
	Seek $  \bm{\Lambda}\in \mathbb{V}_{M}$ such that the variational form
	\begin{equation}
	\label{eq:MTF}
	 (\textbf{\textsf{F}}(s) \bm{\Lambda}, \bm{\Phi})_{\times} = \left( \bm{G}, \bm{\Phi} \right)_{\times}, \quad \text{ for all }\bm{\Phi}\in  \dbtilde{\mathbb{V}}_{M},
	\end{equation}
	is satisfied for $ \bm{G} = (
	{G}_0, {G}_1, \ldots, {G}_{M} ) \in \mathbb{V}_{\text{pw},M}$ with 
	\begin{equation}
	\textbf{\textsf{F}}(s) := \begin{pmatrix}
	\mathsf{A}_0(s) & -\frac{1}{2}\mathsf{\widetilde{X}}_{01} & \ldots & -\frac{1}{2}\mathsf{\widetilde{X}}_{0N}\\
	-\frac{1}{2}\mathsf{\widetilde{X}}_{10} & \mathsf{A}_1(s) & \ldots & -\frac{1}{2}\mathsf{\widetilde{X}}_{1N}\\
	\vdots & \vdots & \ddots & \vdots \\
	-\frac{1}{2}\mathsf{\widetilde{X}}_{N0} & -\frac{1}{2}\mathsf{\widetilde{X}}_{N1} & \ldots & \mathsf{A}_{M}(s)
	\end{pmatrix} \ : \ \mathbb{V}_{M} \rightarrow \mathbb{V}_{M,\text{pw}}.
	\end{equation}
	wherein $\mathsf{\widetilde{X}}_{i k}$ denotes the restriction-and-extension by zero operator:
\begin{equation}\label{eq:tildeX} 
\left(\mathsf{\widetilde{X}}_{i k}\Lambda_{k}, \Phi_{i}\right)_{\times, i} := \left\{ 
\begin{array}{cl}
\left\langle\Lambda_{k, d}, \overline{\Phi}_{i, n}\right\rangle_{\Gamma_{i k}} - \left\langle\overline{\Phi}_{i, d}, \Lambda_{k, n}\right\rangle_{\Gamma_{i k}}, & \quad k\in \Upsilon_{i},\\
&\\
0, & \quad k\notin \Upsilon_{i},
\end{array}
  \right.
\end{equation}
i.e. the duality product on the interface $ \Gamma_{i k} $.
\end{problem}

\begin{remark}
Broken function spaces $\widetilde{H}_\text{pw}^{\mp \half}(\partial \Omega_i)$ and ${H}_\text{pw}^{\pm \half}(\partial \Omega_i)$ --\eqref{eq:tildepw} and \eqref{eq:pw}-- let us properly define the restriction of boundary data to the open interface $ \Gamma_{i k} $ in \eqref{eq:tildeX}. This is also crucial for the analysis of the MTF \cite{hiptmair2012multiple}.
\end{remark}

\begin{remark}\label{rhs}
For the case of an incident complex-valued field $ U^{\text{inc}} $ coming from the exterior domain $ \Omega_0, $ the right-hand side in Problem \ref{MTF_freq} is given by $ \bm{G} = \frac{1}{2}\mathsf{R} \gamma^0 U^{\text{inc}} $, where we summarize in
$$ \mathsf{R} \ : \  \mathbf{V}_0 \rightarrow \mathbb{V}_{pw, M}$$ 
the restriction-extension operators defined in \cite{hiptmair2012multiple} by the expression
$$ 
\begin{array}{rcl}
\left( \mathsf{R} \gamma_0 U^{\text{inc}}, \bm{\Phi} \right)_{\times} & = &  \left\langle \gamma_d^0 U^{\text{inc}}, \overline{\Phi}_{0, n}   \right\rangle_{\Gamma_{0}} - \langle \Phi_{0, d}, \gamma_n^0 \overline{U^{\text{inc}}}  \rangle_{\Gamma_{0}}  \\
&&\\
&- & \displaystyle\sum_{k\in \Upsilon_0} \left\{ \left\langle \gamma_d^0 U^{\text{inc}}, \overline{\Phi}_{k, n}   \right\rangle_{\Gamma_{0k}} + \langle \Phi_{k, d}, \gamma_n^0 \overline{U^{\text{inc}}}  \rangle_{\Gamma_{0k}} \right\}.
\end{array}
$$
\end{remark}

Well-posedness of Problem \ref{MTF_freq} follows from the corresponding result for its real wavenumber counterpart (Helmholtz transmission problem) studied in \cite{hiptmair2012multiple}, based on coercivity results that still hold for the complex wavenumber case.
\begin{theorem}[Existence and Uniqueness]\label{MTF_freq_exist_uniq}
	There exists a unique solution to Problem \ref{MTF_freq}.
\end{theorem}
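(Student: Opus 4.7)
The plan is to reduce the claim to the real-wavenumber analysis of \cite{hiptmair2012multiple} by checking that the required coercivity estimates survive the passage to $s\in\mathbb{C}_+$. First I would invoke the coercivity of the local Calder\'on operators $\A_i(s)$ noted in the remark above (\cite[Proposition~3.2]{EFH19}): for each $i$ there exists a sign-flip isomorphism $\Theta_i:\mathbf{V}_i\to\mathbf{V}_i$ (typically flipping the sign of the Neumann component, as in the classical Calder\'on projector splitting) such that
\begin{equation*}
\operatorname{Re}\,\left(\A_i(s)\Lambda_i,\Theta_i\Lambda_i\right)_{\times,i}\;\geq\;\alpha_i(s)\,\norm{\Lambda_i}^2_{\mathbf{V}_i}\;-\;c_i(s)\,\norm{\Lambda_i}^2_{\mathbf{V}_{i,\text{lo}}},
\end{equation*}
with $\alpha_i(s)>0$ depending on $\operatorname{Re}(s)>0$ and $\mathbf{V}_{i,\text{lo}}$ a lower-regularity space compactly embedded in $\mathbf{V}_i$. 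Summing over $i=0,\ldots,M$ supplies a G{\aa}rding-type inequality for the block-diagonal part of $\textbf{\textsf{F}}(s)$ on $\mathbb{V}_M$.

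Next I would address the off-diagonal restriction-extension operators $\mathsf{\widetilde{X}}_{ik}$. The key observation, already exploited in \cite{hiptmair2012multiple} for the real-wavenumber case, is that by pairing the contributions from $(i,k)$ and $(k,i)$ on the shared interface $\Gamma_{ik}=\Gamma_{ki}$, the combined sesquilinear coupling $\sum_{i,k}(\mathsf{\widetilde{X}}_{ik}\Lambda_k,\Theta_i\Lambda_i)_{\times,i}$ reorganises into a skew-Hermitian form plus a term controlled by the lower-regularity norms. Since the defining formula \eqref{eq:tildeX} is $s$-independent, this interface bookkeeping carries over verbatim from the Helmholtz setting of \cite{hiptmair2012multiple}, and the block-diagonal G{\aa}rding bound above survives: $\textbf{\textsf{F}}(s)$ is coercive on $\mathbb{V}_M$ up to a compact perturbation.

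By the Fredholm alternative, existence reduces to injectivity. I would argue that any $\bm{\Lambda}\in\mathbb{V}_M$ solving the homogeneous problem reconstructs, via the representation formula \eqref{eq:representation_formula} applied in each subdomain, a field $U$ solving the homogeneous transmission problem for the modified Helmholtz equation \eqref{eq:modHem} in $\mathbb{R}^2$. Multiplying by $\overline{U}$, integrating by parts in each $\Omega_i$ and using the matched Dirichlet/Neumann traces on every $\Gamma_{ij}$ yields
\begin{equation*}
\sum_{i=0}^{M}\int_{\Omega_i}\!\left(|\nabla U|^2 + s_i^2\,|U|^2\right)\,\mathrm{d}\bm{x}\;=\;0,
\end{equation*}
so taking real and imaginary parts (using $\operatorname{Re}(s_i^2)>0$ and $\operatorname{Im}(s_i^2)$ of fixed sign for $s\in\mathbb{C}_+$) forces $U\equiv 0$ globally. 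The jump relations for $\S_i(s)$ and $\D_i(s)$ then return $\bm{\Lambda}=0$.

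The main obstacle I anticipate is the bookkeeping in the second step: selecting the sign-flip isomorphisms $\Theta_i$ on the subdomains in a globally consistent way so that the interface duality products in \eqref{eq:tildeX} rearrange into a genuinely skew-Hermitian (or at least compact) contribution, rather than destroying the diagonal ellipticity. Once this is arranged, the argument is structurally identical to that of \cite{hiptmair2012multiple}, with the real wavenumber $\kappa$ replaced by the complex parameter $s\in\mathbb{C}_+$, for which energy arguments on the transmission problem are strictly easier than in the indefinite Helmholtz case.
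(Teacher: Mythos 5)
Your proposal follows essentially the same route the paper intends: the paper offers no self-contained proof, but defers to the well-posedness analysis of the local MTF in \cite{hiptmair2012multiple} (generalized G\aa rding inequality via a sign-flip isomorphism on each $\mathbf{V}_i$, skew treatment of the interface operators $\mathsf{\widetilde{X}}_{ik}$, Fredholm alternative plus injectivity), observing that the coercivity estimates persist---indeed become easier, cf.~\cite[Proposition~3.2]{EFH19}---for complex $s\in\mathbb{C}_+$, which is exactly the structure you sketch. One slip in your injectivity step: for $s\in\mathbb{C}_+$ it is \emph{not} true that $\mathrm{Re}(s_i^2)>0$ (take $s=1+10\imath$); the standard repair is to take first the imaginary part of the energy identity, $\mathrm{Im}(s^2)\sum_i c_i^{-2}\int_{\Omega_i}|U|^2=0$, which forces $U\equiv 0$ unless $s$ is real and positive, in which case the real part suffices---or, equivalently, test with $\overline{s}\,\overline{U}$ (divide the identity by $s$), whose real part is strictly positive for every $s\in\mathbb{C}_+$; with that adjustment your argument goes through.
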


\section{Time Domain BIEs}
\label{sec:TDBIE}
We now shift our attention to deriving a time-domain formulation based on the inverse Laplace transform of operator $ \textbf{\textsf{F}}(s)$ following \cite{sayas2016retarded}. To this end, let us start by recalling the Laplace transform and its inverse for a causal function $ f \in L^2(\mathbb{R}_+)$:
\begin{equation}
F(s) := (\mathcal{L}f)(s) := \displaystyle \int_{0}^{\infty} f(t)e^{-st} \text{d}t, \quad f(t) = (\mathcal{L}^{-1}\{s\mapsto   F(s)\})(t) := \dfrac{1}{2\pi \imath} \int\limits_{\sigma -i\infty}^{\sigma+i\infty} e^{st}F(s)\text{d}s,
\end{equation}
where $ \sigma >0  $. These definitions can be easily extended to vector-valued causal distributions (cf.~\cite[Section~1.2]{hassell2016convolution} or \cite[Section~2.1]{sayas2016retarded}).
For causal density functions $\psi  :  \reals_+ \rightarrow H^{-\half}(\Gamma)$ and $ \varphi  :  \reals_+ \rightarrow H^{\half}(\Gamma)$ that can be extended to causal tempered distributions with values in Sobolev spaces and their corresponding Laplace transforms $ \Psi  :  \mathbb{C}_+ \rightarrow  H^{-\half}(\Gamma)$ and $ \Phi :  \mathbb{C}_+ \rightarrow H^{\half}(\Gamma)$, we define time-domain single  and double layer potentials as
\begin{equation}\label{eq:tdpotentials}
\mathcal{S}\ast \psi := \mathcal{L}^{-1}\left(\{s\mapsto \mathsf{S}(s)\Psi(s)  \}\right), \quad \mathcal{D}\ast \varphi := \mathcal{L}^{-1}\left(\{s\mapsto \mathsf{D}(s)\Phi(s)  \}\right),
\end{equation}
where $ \mathsf{S}(s) $ and $ \mathsf{D}(s) $ are the single and double layer potentials for the (modified) Helmholtz equation $ -\Delta U + s^2 U = 0$ defined in \eqref{eq:potentials}.

In general, for normed spaces $ X $ and $ Y $ and a transfer function $ \mathsf{F}(s)  : X \rightarrow Y $ in the Laplace domain, we will define the corresponding convolutional operator in time domain as 
\begin{equation}\label{eq:td_op}
\mathcal{F}\ast \varphi := \mathcal{L}^{-1}\left(\{s\mapsto  \mathsf{F}(s)\Phi(s)   \}\right), 
\end{equation}
with $(\mathcal{L}\varphi)(s) = \Phi(s) $ and $ \Phi(s) \in X $ for all $ s \in \mathbb{C}_+ $. Details about the existence of this operator can be found in \cite[Propositions 3.1.1 and 3.1.2]{sayas2016retarded}.
With this, we are able to define Calder\'on BIOs in the time domain by means of the inverse Laplace transform and corresponding operators for the modified Helmholtz equation. 

We define the time-domain Calder\'on operator:
\begin{equation}\label{eq:cald_op_td}
\quad \mathcal{A} := \begin{pmatrix}
-\mathcal{K} & \mathcal{V}\\ \mathcal{W} & \mathcal{K}'
\end{pmatrix}
\end{equation}
with $ \mathcal{V},\ \mathcal{K},\ \mathcal{K}' $ and $ \mathcal{W} $ being the corresponding time-domain counterparts of the weakly singular, double-, adjoint-double layer and hypersingular BIOs based on \eqref{eq:bios} and \eqref{eq:td_op}.

Finally, we can write a time-domain version of the local MTF.
\begin{problem}\label{TDMTF}
	Let $u^{\text{inc}}$ be as in \eqref{eq:wave-eqn}. We seek a causal $ \mathbb{V}_M$-valued distribution $ \bm{\lambda}$, i.e.~$\bm{\lambda}\in C^1([0,\infty);\mathbb{V}_{M})$, such that
	\begin{equation}
	\mathcal{F}\ast \bm{\lambda} = \bm{g},
	\end{equation} 
	where 
	\begin{equation}
\mathcal{F} := 
	\begin{pmatrix}
	\mathcal{A}_0 & -\frac{1}{2}\mathsf{\widetilde{X}}_{01}\otimes \delta_0 & \ldots & -\frac{1}{2}\mathsf{\widetilde{X}}_{0N}\otimes \delta_0\\
	-\frac{1}{2}\mathsf{\widetilde{X}}_{10}\otimes \delta_0 & \mathcal{A}_1 & \ldots & -\frac{1}{2}\mathsf{\widetilde{X}}_{1N}\otimes \delta_0\\
	\vdots & \vdots & \ddots & \vdots \\
	-\frac{1}{2}\mathsf{\widetilde{X}}_{N0}\otimes \delta_0 & -\frac{1}{2}\mathsf{\widetilde{X}}_{N1}\otimes \delta_0 & \ldots & \mathcal{A}_{M}
	\end{pmatrix} \ \text{and} \ \bm{g} = \dfrac{1}{2} \mathsf{R}\gamma^0u^{\text{inc}}.
		\end{equation}
with $\bm{g}$ in the Bochner space $C^1([0,\infty);\mathbb{V}_{\text{pw},M})$, $\delta_0$ denoting the Dirac delta at time $t=0$ and $ \mathsf{R} $ the operator defined in Remark \ref{rhs}.
\end{problem}

\FloatBarrier
\section{Spatial Spectral Discretizations}
\label{sec:numscheme}
We now turn to the approximation of the above system by means of CQ combined with spatial spectral non-conforming elements, starting with the latter. The reason for this is that the CQ requires multiple solves of the Laplace domain MTF system, thereby rendering low-order methods computationally too intensive and even impractical for complex geometries.

\subsection{Spectral Elements}
\label{sec:spectral}
Following \cite{jerez2015local,jerez2016local}, we consider spectral elements based on Chebyshev polynomials to discretize the spatial unknowns in Problem \ref{TDMTF}.

Assume that for each interface $ \Gamma_{ji} $ there exists a $ C^1$-parametrization $\boldsymbol{h}_{ji} $ that maps the nominal segment $ \widehat{\Gamma} := [-1, 1] $ onto $ \Gamma_{ji}$. We derive a non-conforming Petrov-Galerkin discretization of problem \eqref{eq:MTF} by using the space spanned by second kind Chebyshev polynomials of degree $ \ell \in \mathbb{N}_0$:
\begin{equation}
\upsilon_\ell(t) := \dfrac{\sin (\ell+1)\theta}{\sin \theta}, \quad t = \cos \theta, \quad \theta\in[0,\pi],
\end{equation}
as trial functions and test functions spanned by weighted second kind Chebyshev polynomials as follows. 

Let $L\in\mathbb{N}$ indicate the dimension of the finite-dimensional space. On each interface we use the same trial space for both $ H^\frac{1}{2}(\Gamma_{ji}) $ and $ H^{-\frac{1}{2}}(\Gamma_{ji}) $ defined by 
\begin{equation}\label{eq:trial_space}
 \mathbb{T}_L^{ji} := \text{span }\{\xi^{ji}_0, \ldots, \xi^{ji}_{L}   \}, \quad \xi^{ji}_{\ell}(t) := \dfrac{1}{\|\boldsymbol{h}'_{ji}(t)\|}\upsilon_{\ell}(t), \quad t \in \widehat{\Gamma}.
\end{equation}
Similarly, on each interface we use a test space for both $ \widetilde{H}^{\frac{1}{2}}(\Gamma_{ji}) $ and $ \widetilde{H}^{-\frac{1}{2}}(\Gamma_{ji}) $ defined by
\begin{equation}\label{eq:test_space}
\mathbb{Q}_L^{ji} := \text{span }\{\chi^{ji}_0, \ldots, \chi^{ji}_{L}   \}, \quad \chi^{ji}_{\ell}(t) := \dfrac{1}{\|\boldsymbol{h}'_{ji}(t)\|}\upsilon_{\ell}(t)\sqrt{1-t^2}, \quad t \in \widehat{\Gamma}.
\end{equation}
For $k,\ell\in\mathbb{N}_0$, we recall the orthogonality property:
\begin{equation}\label{eq:orthogonal}
\displaystyle \int_{-1}^{1} \upsilon_\ell(t)\upsilon_k(t)\sqrt{1-t^2}dt = 
\left\{ 
\begin{array}{ll}
0 & \ell\neq k,\\
\pi/2 & \ell = k,
\end{array}
\right.
\end{equation}
which jointly with related Fourier-Chebyshev expansions \cite[Chapter~3]{trefethen2013approximation} allow for the fast computation of matrix entries by means of the FFT. 

\begin{remark}
The above property motivates the weight $ \sqrt{1-t^2} $ in our test functions. More importantly, this weight forces test functions to vanish at the endpoints of every interface as required for spaces $ \widetilde{H}^{\frac{1}{2}}(\Gamma_{ji}). $
\end{remark}

Analogously to the continuous case, we define finite-dimensional subspaces of our broken spaces:
\begin{equation}
\begin{array}{lcl}
\mathbf{X}_L^{i} & := & \left\{ \xi \in \mathbf{V}_{\text{pw},i} \ : \ \xi |_{\Gamma_{ji}} \in \mathbb{T}_L^{ji}\times\mathbb{T}_L^{ji}, \quad j \in \Upsilon_{i}    \right\}, \\
&&\\

\mathbf{Y}_L^{i} & := & \left\{ \chi \in \dbtilde{\mathbf{V}}_{\text{pw},i} \ : \ \chi |_{\Gamma_{ji}} \in \mathbb{Q}_L^{ji}\times\mathbb{Q}_L^{ji}, \quad j \in \Upsilon_{i}     \right\}.
\end{array}
\end{equation}
Density of the above spaces in the product ones $ \mathbf{V}_{\text{pw}, i} $ and $ \dbtilde{\mathbf{V}}_{i} $ was established in \cite[Propositions~1-2]{jerez2015local}.

Finally, we introduce
$$\mathbb{X}_{M,L} := \mathbf{X}_L^0 \times \ldots \times \mathbf{X}_L^{M}, \quad  \mathbb{Y}_{M,L}:= \mathbf{Y}_L^{0}\times \ldots \times \mathbf{Y}_L^M.$$

\begin{remark}
Note that our discretization scheme is non-conforming as it allows for discontinuities in the Dirichlet unknowns at the endpoints of every interface, i.e. $\mathbf{X}_L^{i}\not\subset H^\frac{1}
{2}(\partial\Omega_i)\times H^{-\frac{1}{2}}(\partial\Omega_i)$. Yet, as test functions vanish at the endpoints of each interface, the formulation remains well defined (cf.~\cite{hiptmair2012multiple, jerez2015local}).
\end{remark}

We can now define the discrete version of Problem \ref{MTF_freq}:

\begin{problem}[Spectral non-conforming Petrov-Galerkin MTF]\label{specMTF}
	We seek $ \bm{\Lambda}_L \in \mathbb{X}_{M,L} $ such that the variational form:
	\begin{equation}
	( 	\textbf{\textsf{F}}(s)\bm{\Lambda}_L, \bm{\Phi}_L)_{\times} = (\bm{G}, \bm{\Phi}_L)_\times, \quad \text{for all }\bm{\Phi}_L\in \mathbb{Y}_{M,L},
	\end{equation}
	is satisfied for $ \bm{G} = (G_0, G_1, \ldots, G_M)\in \mathbb{V}_{\text{pw},M}. $
\end{problem}

\begin{remark}\label{rem:unsolved}
Existence and uniqueness of solutions for Problem \ref{specMTF} depend on the existence of an inf-sup condition. This result remains elusive by now, but various numerical experiments show the good behaviour of the proposed discretization scheme \cite{jerez2015local, jerez2016local}.
\end{remark}

\begin{remark}
	Notice that the discrete problem consists in finding a solution in the space $ \mathbb{V}_{pw, M} $ instead of $ \mathbb{V}_M $, which was the case for Problem \ref{MTF_freq}. Existence and uniqueness for the continuous problem remains valid in $ \mathbb{V}_{\text{pw}, M} $, as mentioned in \cite[Theorem 1]{jerez2015local}, due to the equivalence of the duality product between $ \mathbb{V}_M $ and $ \mathbb{V}_{\text{pw}, M} $ when test functions are elements of $ \dbtilde{\mathbb{V}}_{M} $.
\end{remark}

%
%
%

\subsection{Computation of Galerkin matrices}
We now compute the explicit integrals for each of the BIOs involved in the construction of the Calder\'on operator, as it was done in \cite{jerez2015local}. Integrals over $ \widehat{\Gamma}\times \widehat{\Gamma} $ are numerically approximated by the following two-step scheme:
\begin{itemize}
	\item[(a)] Computation of Chebyshev coefficients for the kernel in the inner integral.
	\item[(b)] Gauss-Legendre quadrature rule for the outer integral. 
\end{itemize}

For implementation purposes, we briefly explain the algorithm. Let $ K(\eta, \tau) \ : \ \widehat{\Gamma}\times \widehat{\Gamma}\rightarrow \mathbb{C} $ denote any of the BIO kernels.  For each subdomain $ \Omega_{i} $ and for a pair of interfaces $ \Gamma_{ji}$ and $ \Gamma_{i k} $, we proceed as follows:
\begin{enumerate}[i.]
	\item Set the number of Gauss-Legendre quadrature points $ \{\eta_j\}_{j=1}^{N_g} $ and Chebyshev points $ \{\tau_l\}_{l=1}^{N_c} $.
	\item Kernels $ K(\eta, \tau) $ are evaluated at each of these points and their values are stored. For the case of self-interactions, $ j=k $, the kernel is regularized with the Laplace kernel to extract the singularity (see Remark \ref{rem:reg}). 
	\item By means of the FFT over the array $ \{K(\eta_j, \tau_l) \}_{l=1}^{N_c}$, one
	computes approximations of the Chebyshev coefficients $ f_l(\eta_j)$,  $l = 1,\ldots, N_c,$ of the polynomial interpolant of the kernel 
	$$ K(\eta_j, \tau) \approx \displaystyle\sum_{l = 0}^{N_c} f_l(\eta_j)\upsilon_l(\tau), \quad \tau\in \widehat{\Gamma}, $$
	 for every Gauss-Legendre quadrature point $ \eta_{j}$, $j = 1, \ldots, N_g $.
	\item The outer integral is computed via a Gauss-Legendre quadrature rule.
\end{enumerate}

\begin{remark}
	\label{rem:reg}
	Regularization of the kernel is done for the case $ \Gamma_{ji} = \Gamma_{ki} $ by using the Laplace kernel $$ G(\nex, \ney) := \dfrac{1}{2\pi}\log \|\nex -\ney\|, \quad \nex \neq \ney, \quad  \nex,\ney\in\widehat{\Gamma}. $$
	Over the canonical segment $\widehat{\Gamma}$, the following kernel expansion \cite[Remark~4.2]{JNU2018} based on Chebyshev polynomials of the first kind, $ T_\ell(\eta):=\cos(\ell \theta) $, simplifies our computations:
	$$ \dfrac{1}{2\pi}\log|\eta-\tau| = \dfrac{1}{2\pi} \log 2 + \displaystyle\sum_{n=1}^{\infty}\dfrac{1}{n\pi}T_n(\eta)T_n(\tau), \quad \tau\neq \eta \in \widehat{\Gamma}.$$
	Specifically, two integrals have to be computed: one with the regularized kernel $ G_{i}(\nex, \ney; s) - G(\nex, \ney)$
	and one with the Laplace kernel that can be computed exactly. 	
\end{remark}

\subsubsection{Weakly Singular BIO}\label{sec:SLop}
For the weakly singular operator $ \V_{i}(s), \ s\in \mathbb{C}_+ $ defined on the boundary $\Gamma_i=\partial\Omega_{i} $, we need to compute integrals of the form:
\begin{equation}\label{eq:SLop}
\displaystyle\int\limits_{\widehat{\Gamma}} \displaystyle\int\limits_{\widehat{\Gamma}} G_{i}(\norm{\boldsymbol{h}_{i j}(\tau) - \boldsymbol{h}_{i k}(\eta)}; s)\upsilon_l(t)\upsilon_m(\tau)\sqrt{1-\tau^2} \text{d}\tau \text{d}\eta.
\end{equation}
For $ \eta \neq \tau $, the approximation
\begin{equation}
 G_{i}(\norm{\boldsymbol{h}_{i j}(\tau) - \boldsymbol{h}_{i k}(\eta)}; s) \approx \sum_{l = 0}^{N_c}f_l(\eta)\upsilon_l(\tau)
\end{equation}
holds and, by the orthogonality property \eqref{eq:orthogonal}, one obtains
\begin{equation}
\displaystyle\int\limits_{\widehat{\Gamma}} \displaystyle\int\limits_{\widehat{\Gamma}} G_{i}(\norm{\boldsymbol{h}_{i j}(\tau) - \boldsymbol{h}_{i k}(\eta)}; s)\upsilon_l(\eta)\upsilon_m(\tau)\sqrt{1-\tau^2} \text{d}\tau \text{d}\eta \approx \dfrac{\pi}{2}\displaystyle\int_{-1}^{1}f_l(\eta)\upsilon_l(\eta)\text{d}\eta,
\end{equation}
which is then approximated by a Gauss-Legendre quadrature rule. 
\subsubsection{Double Layer BIOs}
For the double layer BIOs and its adjoint, $ \K_{i}(s) $ and $ \K'_{i}(s)$, respectively, the approach is the same as in Section \ref{sec:SLop}. The only difference is that the Chebyshev coefficients have to be computed for the corresponding kernels.

\subsubsection{Hypersingular BIO}
For the hypersingular operator $ \W_{i}(s) $ defined on the boundary $ \partial\Omega_{i} $ we employ the following expression from \cite[Lemma~6.13, Theorem~6.15]{steinbach2007numerical}. Let $ \Gamma $ be an open measurable part of $ \partial\Omega_{i} $ and $ f $ and $ g $ be continuously differentiable on $ \Gamma.$ Then, it holds that
\begin{equation}\label{eq:Wop}
\begin{array}{lcl}
\langle \W_{i}(s)f, g\rangle_{\Gamma} &=& \langle  \V_{i}(s) \text{curl }f, \text{curl }g\rangle_{\Gamma}
 +\left( \dfrac{s}{c_{i}} \right)^2 \langle  \V_{i} (\bm{n}^{i}_{\nex}\cdot \bm{n}^{i}_{\ney} f), g\rangle_{\Gamma} \\
 &-& f(\nex)\left. \displaystyle\int\limits_{\Gamma} G_{i}(\norm{\nex - \ney}; s) \text{curl }g(\ney)\text{d}\Gamma_{\ney} \right|_{\nex\in \partial \Gamma},
\end{array}
\end{equation}
where $ \partial \Gamma $ denotes the endpoints of the open curve $ \Gamma. $ The first and second terms are computed as in the case for the weakly singular BIO and employing Chebyshev polynomials derivatives. The third term involves computing Chebyshev coefficients for which no quadrature rule is required.  

\section{Convolution Quadrature Schemes}
\label{sec:CQ}
Multistep-based and multistage CQ methods were originally introduced by Lubich in \cite{lubich1988convolution, lubich1988convolution_II} and in \cite{lubichRK}, respectively. For the sake of completeness, both methods will be explained along with their assumptions and limitations following \cite{hassell2016convolution}. 

Consider two functions $ f  :  [0, \infty)\rightarrow \mathcal{X} $ and $ g :  [0, \infty) \rightarrow \mathcal{Y}$ where $ \mathcal{X} $ and $ \mathcal{Y} $ are normed spaces. We further assume that $ F(s) := (\mathcal{L}f)(s) $ is known. We are interested in computing the convolution, which corresponds to

\begin{equation}\label{eq:CQ_conv}
h(t)  :=  \displaystyle \int_{0}^{t}f(\tau)g(t-\tau)\text{d}\tau
= \dfrac{1}{2\pi \imath} \displaystyle\int_{\sigma -i\infty}^{\sigma+i\infty} F(s) \left(   \int_{0}^{t}e^{s\tau}g(t-\tau)\text{d}\tau \right)\text{d}s.
\end{equation}
The inner integral in the right-hand side corresponds to the exact solution of the ordinary differential equation:
\begin{equation}\label{eq:ode}
\dot{y}(t)  = sy(t) + g(t) \quad t \in \mathbb{R}_+, \quad y(0) = 0,
\end{equation}
which can be solved by means of a multistep or a multistage linear method.

\subsection{Multistep Convolution Quadrature}\label{sec:multistep}
A multistep method \cite[Ch.~III.2]{wanner1993solving} for solving equation \eqref{eq:ode} is defined by parameters $ \alpha_{k}$, $\beta_{k}$, $k= 0, \ldots, m $ and a timestep $ \Delta t > 0$ with discrete times $ t_n = n\Delta t $ such that the new unknown corresponds to $ y_n \approx y(t_n)$, for  $n = 0, 1, \ldots $, in

\begin{equation}
\sum_{k = 0}^{m} \alpha_{k}y_{n+k-m}  =  \Delta t\sum_{k = 0}^{m}\beta_{k} (sy_{n+k-m} + g_{n+k-m}), \quad n = 0, 1, \ldots.
\end{equation}
At discrete times $ t_n $, an approximation for the convolution \eqref{eq:CQ_conv} is cast as  the following closed contour integral:
\begin{equation}\label{eq:CQ_convolution}
\begin{array}{rcl} 
h(t_n) &\approx& \dfrac{1}{2\pi \imath }\displaystyle\int_{\mathcal{C}} \dfrac{1}{\zeta^{n+1}}F\left(\frac{\delta(\zeta)}{\Delta t}\right)G(\zeta) \text{d}\zeta = \displaystyle \sum_{k = 0}^{n}\omega_k^F(\Delta t) g_{n-m},
\end{array}
\end{equation}
where 
\begin{equation}\label{eq:cq_weights}
 \delta(\zeta) = \dfrac{ \sum_{k = 0}^{m} \alpha_{m-k}\zeta^{k}}{ \sum_{k = 0}^{m} \beta_{m-k}\zeta^{k}},\quad F\left(\frac{\delta(\zeta)}{\Delta t}\right) = \displaystyle \sum_{n = 0}^{\infty}\omega_n^F(\Delta t)\zeta^n\quad \text{ and } \quad G(\zeta) = \displaystyle\sum_{n= 0}^{\infty}g_n \zeta^n.
 \end{equation}
For implementation purposes, a good choice for the integration contour $ \mathcal{C} $ is a circle of radius $ 0<\lambda < 1 $, explicitly $ \lambda = \varepsilon^{\frac{1}{2N}} $ \cite{banjai2008rapid},  where $\varepsilon $ refers to machine precision and $ N $ is the number of quadrature points used. Later on, this number will coincide with the number of timesteps for the time discretization. Indeed, this allows the stable use of FFT to compute the integral by means of a trapezoidal rule. Following \cite[Sections~3.2-3.3]{hassell2016convolution}, the final expression for \eqref{eq:CQ_convolution} becomes
\begin{equation}\label{eq:trapezoid}
h(t_n) =\dfrac{1}{2\pi \imath }\displaystyle\int_{\mathcal{C}} \dfrac{1}{\zeta^{n+1}}F\left(\frac{\delta(\zeta)}{\Delta t}\right)G(\zeta) \text{d}\zeta \approx \lambda^{-n} \left(\dfrac{1}{N+1} \displaystyle \sum_{k  = 0}^{N} \widehat{F}_{k}   \left( \sum_{j = 0}^{N}\lambda^j g_j \zeta_{N+1}^{-jk}   \right)\zeta_{N+1}^{kn} \right), 
\end{equation}
where $ \widehat{F}_{k}  = F\left(\dfrac{\delta(\lambda \zeta_{N+1}^{-k})}{\Delta t}\right) $, $ \zeta_{N+1} = e^{\frac{2\pi \imath }{N+1}} $. Moreover, the method converges at the rate of the multistep method chosen. Due to Dahlquist's barrier theorem \cite{wanner1996solving}, A-stable multistep methods are limited to order less than or equal to two, which is the main disadvantage of multistep-based CQ.

\begin{remark}
The expression \eqref{eq:trapezoid} is well defined as long as $ F, G $ and $ \delta$ are analytic. The contour  $ \mathcal{C} $ must belong to the analyticity region of the transfer function $ F\left(\frac{\delta(\zeta)}{\Delta t}\right)G(\zeta)$ and wind around zero once, which is guaranteed by using A-stable multistep methods \cite{lubich1988convolution}. 	
\end{remark}

\subsection{Multistage Convolution Quadrature}
\label{sec:multistage}
We solve \eqref{eq:ode} by employing A-stable implicit multistage methods of arbitrary order \cite{banjai2011runge} instead of the more restrictive multistep methods. 

Let $ A\in\mathbb{R}^{m\times m}, \ \bm{b}, \bm{d} \in \mathbb{R}^m $ be the Butcher's tableau for a given $m$-stage Runge-Kutta method \cite[Chapter 4]{wanner1996solving}, and defining $ y_{nj} \approx y(t_n + d_j\Delta t) $, for \eqref{eq:ode}, the problem consists in looking for a vector-valued function $ \bm{y}_n := (y_{n1}, \ldots, y_{nm}) = (y(t_n + d_1\Delta t), \ldots, y(t_n + d_m\Delta t)) = y(t_n + \bm{d}\Delta t), \ n = 0, \ldots, N,$ of stage solutions such that

\begin{equation}\label{eq:RK}
\begin{array}{rclll}
\bm{y}_{n} &=& y_n \bm{1} &+& \Delta t \  A(\bm{y}_n + g(t_n + \bm{d}\Delta t)), \\
&&&&\\
y_{n+1} &=& y_n &+& \Delta t \  \bm{b} \cdot (\bm{y}_n + g(t_n + \bm{d}\Delta t)). 
\end{array}
\end{equation}
Assume that $\bm{G}(\zeta) = \sum_{n = 0}^{\infty} g(t_n +\bm{d}\Delta t) \zeta^n$ and $  \delta^{\text{RK}}(\zeta) :=  \left( \bm{1}\bm{b}^T  \dfrac{\zeta}{1-\zeta} + A \right)^{-1}$. Following a similar procedure to that of the multistep case \cite{banjai2011runge}, one finds
\begin{equation}\label{eq:RKCQ_convolution}
 \bm{h}_n = h(t_n + \bm{d}\Delta t) \approx \dfrac{1}{2\pi \imath }\displaystyle\int_{\mathcal{C}} \dfrac{1}{\zeta^{n+1}} F\left(\dfrac{\delta^{\text{RK}}(\zeta)}{\Delta t}\right)\bm{G}(\zeta) \text{d}\zeta.  
\end{equation}
If we consider stiffly accurate Runge-Kutta methods --for example RadauIIA or LobattoIIIC classes \cite{lubichRK}-- we have the relation $h_{nm} = h_{n+1}$. The procedure to obtain the convolution at each discrete time follows the same idea employed for the approximation \eqref{eq:trapezoid} \cite[Section~5.5]{hassell2016convolution}.


\subsection{Convolutional Equations}\label{sec:conv_eq}
We have shown how to compute convolutions with CQ, but we are interested in solving equations where the unknown is one of the terms involved in the convolution.
As explained in Sections \ref{sec:multistep} and \ref{sec:multistage}, convolution can be approximated as \eqref{eq:CQ_convolution} or \eqref{eq:RKCQ_convolution}, for multistep or multistage strategies, respectively. Without loss of generality, we now follow the notation of multistep methods.

\begin{problem}[Semi-discrete CQ-MTF]
	\label{CQMTF}
	Let $ N \in \mathbb{N}$ be the fixed number of timesteps, $\Delta t = T/N>0$ be the timestep chosen for a CQ scheme, $ T >0 $ denotes the final time of computation and $t_m = m\Delta t$ discrete times for $m = 0,\ldots, N $, with $t_N=T$. Let $ \delta(\zeta) $ be given by the choice of a multistep or multistage A-stable method. We look for solutions $ \bm{\lambda}_m \in \mathbb{V}_{pw, M}$ at each $t_m$, $m = 0,\ldots, N $, such that
	
	\begin{equation}
	\left(\sum_{m = 0}^{n}\omega_m^{\mathbf{\mathsf{F}}}(\Delta t)\bm{\lambda}_{n-m}, \bm{\Phi}    \right)_{\times} = \left(\bm{g}(t_n), \bm{\Phi}\right)_{\times}, \quad \text{for all }\bm{\Phi}\in \dbtilde{\mathbb{V}}_{M},
	\end{equation}
	where $ \omega_m^{\mathbf{\mathsf{F}}}(\Delta t)$ are the CQ weights coming from the analytic expansion \eqref{eq:cq_weights}, $ \mathbf{\mathsf{F}}$ is the multiple traces operator defined in Problem \ref{MTF_freq} and $ \bm{g}(t)$ is the boundary data defined in Problem \ref{TDMTF}.
\end{problem}

As Problem \ref{CQMTF} corresponds to a triangular Toeplitz system, existence and uniqueness of solutions depend on the solvability of the diagonal terms
\begin{equation}\label{eq:w0}
\left( \omega_0^{\mathbf{\mathsf{F}}}(\Delta t)\bm{\lambda}, \bm{\varphi}\right)_{\times} = (\bm{\Xi}, \bm{\Phi})_{\times}, \quad \text{for all }\bm{\Phi}\in \dbtilde{\mathbb{V}}_M,
\end{equation}
for a given $ \bm{\Xi}\in \mathbb{V}_{\text{pw},M}$. To this end, we notice that convolution weights satisfy \cite{hassell2016convolution}
$$\omega_n^{\mathbf{\mathsf{F}}}(\Delta t) = \dfrac{1}{n!}\dfrac{\text{d}^n}{\text{d}\zeta^n} \left. \left( \mathbf{\mathsf{F}}\left(  \dfrac{\delta(\zeta)}{\Delta t} \right)   \right)\right|_{\zeta = 0}, \ n = 0,\ldots, N, $$
and so $ \omega_0^{\mathbf{\mathsf{F}}}(\Delta t) = \mathbf{\mathsf{F}}\left(  \dfrac{\delta(0)}{\Delta t} \right)   $, for which existence and uniqueness of solutions for \eqref{eq:w0} follows by arguments similar to \cite[Theorem~11]{hiptmair2012multiple}.

\begin{theorem}\label{proofCQMTF} There exists a unique solution $ \{\bm{\lambda}_n \}_{n = 0}^N \in \left[ \mathbb{V}_{\text{pw},M}  \right]^{N+1}
 $ for Problem \ref{CQMTF}.
\end{theorem}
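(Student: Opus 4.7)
The plan is to exploit the lower-triangular Toeplitz structure of the CQ-MTF system and reduce existence and uniqueness to the invertibility of a single diagonal block, which in turn follows from the continuous theory at a shifted Laplace parameter. I would proceed by writing out Problem \ref{CQMTF} row by row: for each $n = 0, 1, \ldots, N$, after moving the already-computed history terms $\bm{\lambda}_0, \ldots, \bm{\lambda}_{n-1}$ to the right-hand side, one is left with a variational equation of the form \eqref{eq:w0} with some known right-hand side $\bm{\Xi}_n \in \mathbb{V}_{\text{pw},M}$. Thus the entire system decouples into $N+1$ identical problems, each with the same operator $\omega_0^{\mathbf{\mathsf{F}}}(\Delta t)$.

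Next I would identify $\omega_0^{\mathbf{\mathsf{F}}}(\Delta t)$. From the identity recalled in the text before the theorem, $\omega_0^{\mathbf{\mathsf{F}}}(\Delta t) = \mathbf{\mathsf{F}}\bigl(\delta(0)/\Delta t\bigr)$, so it suffices to prove that $\mathbf{\mathsf{F}}(s_0)$ is boundedly invertible on $\mathbb{V}_M \to \mathbb{V}_{\text{pw},M}$ for the specific value $s_0 := \delta(0)/\Delta t$. The key analytic ingredient is that for any A-stable multistep method (or, in the Runge--Kutta case, for any A-stable, invertible-$A$ scheme with $\delta^{\mathrm{RK}}(\zeta)$ in place of $\delta(\zeta)$) the symbol $\delta$ maps the open unit disk into the right half-plane, and in particular $\delta(0) \in \mathbb{C}_+$. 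Since $\Delta t > 0$, this gives $s_0 \in \mathbb{C}_+$. Then Theorem \ref{MTF_freq_exist_uniq} applies directly at the parameter $s_0$, yielding unique solvability of \eqref{eq:w0}.

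Combining these two facts, one induct on $n$: for $n=0$ the equation $\omega_0^{\mathbf{\mathsf{F}}}(\Delta t)\bm{\lambda}_0 = \bm{g}(t_0)$ has a unique solution $\bm{\lambda}_0 \in \mathbb{V}_{\text{pw},M}$; supposing $\bm{\lambda}_0, \ldots, \bm{\lambda}_{n-1}$ uniquely determined, the $n$-th row reads $\omega_0^{\mathbf{\mathsf{F}}}(\Delta t)\bm{\lambda}_n = \bm{g}(t_n) - \sum_{m=1}^{n} \omega_m^{\mathbf{\mathsf{F}}}(\Delta t)\bm{\lambda}_{n-m}$, which again has a unique solution by the invertibility of the diagonal block. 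This furnishes the full sequence $\{\bm{\lambda}_n\}_{n=0}^{N}$ in $\left[\mathbb{V}_{\text{pw},M}\right]^{N+1}$.

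The only mildly delicate step I foresee is justifying that $s_0 = \delta(0)/\Delta t \in \mathbb{C}_+$ in full generality, i.e.\ that A-stability of the underlying ODE solver truly forces $\delta$ (or $\delta^{\mathrm{RK}}$) to send the unit disk into $\overline{\mathbb{C}_+}$ and zero into the open right half-plane; I would cite the standard characterization of A-stability (e.g.\ from \cite{lubich1988convolution} and \cite{banjai2011runge}) rather than reprove it. Everything else is bookkeeping on the triangular structure together with a direct appeal to Theorem \ref{MTF_freq_exist_uniq}.
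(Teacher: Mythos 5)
Your proposal is correct and follows essentially the same route as the paper: exploit the lower-triangular Toeplitz structure to reduce everything to the invertibility of the diagonal block $\omega_0^{\mathbf{\mathsf{F}}}(\Delta t)=\mathbf{\mathsf{F}}\bigl(\delta(0)/\Delta t\bigr)$, which is settled by the frequency-domain existence and uniqueness result (Theorem \ref{MTF_freq_exist_uniq}), and then solve forward row by row. Your extra remark that A-stability guarantees $\delta(0)/\Delta t\in\mathbb{C}_+$ is a welcome explicit justification of a point the paper leaves implicit, but it does not change the argument.
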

\begin{proof}
	Problem \ref{CQMTF} can be written as a triangular Toeplitz system:
	\begin{equation*}
	\begin{array}{ccccccccl}
	\left( \omega_0^{\mathbf{\mathsf{F}}}(\Delta t)\bm{\lambda}_0, \bm{\varphi}\right)_{\times} 
	&&\ldots
	&& 0
	&=& (\bm{g}(t_0), \bm{\varphi})_{\times}, \\
	\vdots
	&&\ddots
	&& \vdots
	&&\vdots
	&&\\
	\left( \omega_N^{\mathbf{\mathsf{F}}}(\Delta t)\bm{\lambda}_0, \bm{\varphi}\right)_{\times} 
	&+&
	\ldots
	&+&
	\left( \omega_0^{\mathbf{\mathsf{F}}}(\Delta t)\bm{\lambda}_N, \bm{\varphi}\right)_{\times}  
	&=& (\bm{g}(t_N), \bm{\varphi})_{\times},\\
	\end{array}
	\end{equation*}
	for all $ \bm{\varphi}\in \dbtilde{\mathbb{V}}_M. $ As \eqref{eq:w0} has a unique solution in $ \mathbb{V}_{\text{pw}, M} $ for every right-hand side $ \bm{\Xi}\in \mathbb{V}_{\text{pw},M}$, each one of the previous equations is uniquely solvable by $ \bm{\lambda}_n$, $n= 0,\ldots, N $, by Theorem \ref{MTF_freq_exist_uniq}.
	\end{proof}
Finally, recalling the notation introduced in Section \ref{sec:spectral}, the fully discrete problem reads as follows.
\begin{problem}[Fully Discrete CQ-Spectral Galerkin MTF]
	\label{discreteTDMTF}
	Let $ N \in \mathbb{N}, \ \Delta t = T/N >0 $ be the timestep chosen for a CQ scheme, $ T >0 $ be the final time, $n\in\mathbb{N}$ so that $ t_0 = 0, \ldots, t_n  = n\Delta t, \ldots, t_N = T $ discrete times. Let $ \delta(\zeta) $ be given by the choice of a multistep or multistage A-stable method. We look for solutions $ \bm{\lambda}_n \in \mathbb{X}_{M,L}$ at each discrete times $t_n$, $ n = 0,\ldots, N$, such that
	\begin{equation}
	\left(\sum_{m = 0}^{n}\omega_m^{\mathbf{\mathsf{F}}}(\Delta t)\bm{\lambda}_{n-m}, \bm{\varphi}    \right)_{\times} = \left(\bm{g}(t_n), \bm{\varphi}\right)_{\times}, \quad \text{for all }\bm{\varphi}\in \mathbb{Y}_{M,L},
	\end{equation}
	where $ \omega_m^{\mathbf{\mathsf{F}}}(\Delta t), \ m = 0,\ldots, N, $ are the CQ weights coming from the analytic expansion \eqref{eq:cq_weights}, $ \mathbf{\mathsf{F}}$ is the multiple traces operator defined in Problem \ref{MTF_freq} and $ \bm{g}(t)$ is the boundary data defined in Problem \ref{TDMTF}.
\end{problem}

\begin{remark}
Well-posedness of Problem \ref{discreteTDMTF} remains as an open problem, as it depends on the existence and uniqueness of solutions for the non-conforming spatial spectral scheme (see Remark \ref{rem:unsolved}). 
\end{remark}

\section{Numerical Experiments}
\label{sec:numexp}
We present several numerical experiments to validate our proposed CQ-MTF discretization for different scenarios. All computations were performed on {\sc{Matlab}} 2018a, 64bit, running on a GNU/Linux desktop machine with a 3.80 GHz CPU and 32GB RAM.\footnote{The code is available in \url{http://www.github.com/ijlabarca/cqmtf}.} We measure different error norms to verify that our implementation is correct and analyze its performance. We use equivalent norms for the spaces $ H^{\pm \half}(\Gamma) $ based on the single layer operator and its inverse, for a wavenumber $ k = 10\imath $ ($ s = 10 $ for modified Helmholtz equation).

For each numerical example, we compute relative trace errors over each interface for a density $ \varphi\in H^{s}(\Gamma_{i}) $ compared to a reference density $ \varphi^{\text{ref}}\in H^{s}(\Gamma_{i}) $, with $ s \in \{-\half, \half \} $. We express it as follows
\begin{equation}\label{eq:trace_error}
\begin{array}{rcll}
\texttt{TraceError}(\Gamma_{i}) &:=& \dfrac{\left( \displaystyle\sum_{n = 0}^{N}\norm{\varphi^{\text{ref}}_n - \varphi_n}^2_{H^{s}(\Gamma_{i})} \right)^{\half} }{  \left( \displaystyle\sum_{n = 0}^{N}\norm{\varphi^{\text{ref}}_n}^2_{H^{s}(\Gamma_{i})} \right)^{\half}}, &i = 0, 1, \ldots, M.\\
\end{array}
\end{equation}
We also compute field errors on a set of sample points $S$ in each domain $ \Omega_{i} $ by using the representation formula \eqref{eq:representation_formula}, compared to a reference field $ u^{\text{ref}} $

\begin{equation}\label{eq:errorU}
\texttt{ErrorU}(\Omega_{i}) :=  \dfrac{
	\left( 
	{\displaystyle\sum_{n = 0}^{N}}{\ \displaystyle\sum\limits_{\bm{x}\in S}} |u^{\text{ref}}_n(\bm{x}) - u_n(\bm{x})|^2 \right)^{\half}}{\ \left(\displaystyle\sum_{n = 0}^{N} \displaystyle\sum\limits_{\bm{x}\in S} |u^{\text{ref}}_n(\bm{x})|^2 \right)^{\half}}, \qquad S\in \Omega_{i}.
\end{equation}
CQ is implemented using the BDF2 multistep method \cite[Chapter 5]{wanner1996solving}, completely determined by the polynomial
\begin{equation}
\label{eq:BDF2_gamma}
\gamma(\zeta) = \dfrac{3}{2} - 2\zeta + \dfrac{1}{2}\zeta^2, 
\end{equation}
and multistage methods (Runge-Kutta CQ) corresponding to the two-stage RadauIIa quadrature \cite[Chapter 7]{wanner1996solving}, whose Butcher's tableau is defined by
\begin{equation}
\label{eq:Radau}
A := \begin{pmatrix}
5/12 & -1/12\\ 3/4 & 1/4
\end{pmatrix}, \quad \bm{b} := \begin{pmatrix}
3/4 \\ 1/4
\end{pmatrix} \quad \text{and }\quad  \bm{d} := \begin{pmatrix}
1/3 \\ 1
\end{pmatrix}, 
\end{equation}
and the three-stage LobattoIIIc quadrature  \cite[Chapter 7]{wanner1996solving}, whose Butcher's tableau is defined by
\begin{equation}
\label{eq:Lobatto}
A := \begin{pmatrix}
1/6 & -1/3 & 1/6\\ 1/6 & 5/12 & -1/12\\ 1/6 & 2/3 & 1/6
\end{pmatrix}, \quad \bm{b} := \begin{pmatrix}
1/6 \\ 2/3 \\ 1/6
\end{pmatrix} \quad \text{and }\quad  \bm{d} := \begin{pmatrix}
0 \\ \half \\ 1
\end{pmatrix}.
\end{equation}
Properties of the Radau IIA and Lobatto IIIC methods are summarized in Table \ref{tab:RK-methods}. It is important to notice the differences among these as they play an important role in the expected order of convergence in the numerical scheme. The best result that one can expect is the classical order of convergence $ p$, which is $2.0$ for the BDF2 method and depends on the number of stages for Runge-Kutta methods. Order reduction phenomena are also common and have been observed for other wave propagation problems and CQ applications \cite{banjai2011runge, banjai2012runge, rieder2017convolution}. In general, this depends on the stage order $ q $, which in our case is the same for RadauIIA and LobattoIIIC.  Recall that we do not provide numerical estimates and regularity results for solutions of wave propagation problems over composite materials. Thus, we cannot claim more precise bounds other than these lower and upper ones for error convergence in time domain.

\begin{table}[t]
	\centering 
	
	\begin{tabular}{|c|c|c|c|}
		\hline
		Method & Stages & Stage order ($ q $) & Classical order ($ p $) \\ \hline \hline 
		Radau IIA & $ s $ & $ s $ & $ 2s-1 $ \\ \hline 
		Lobatto IIIC & $ s $ & $ s-1 $ & $ 2s-2 $ \\ \hline 
	\end{tabular}
	\caption{Properties of Runge-Kutta methods used for CQ\cite[Chapter 7]{wanner1996solving}.}
	\label{tab:RK-methods}
\end{table}

\begin{figure}[b]
	\begin{subfigure}[c]{0.5\textwidth}
		\centering
		\includegraphics[width=0.85\linewidth]{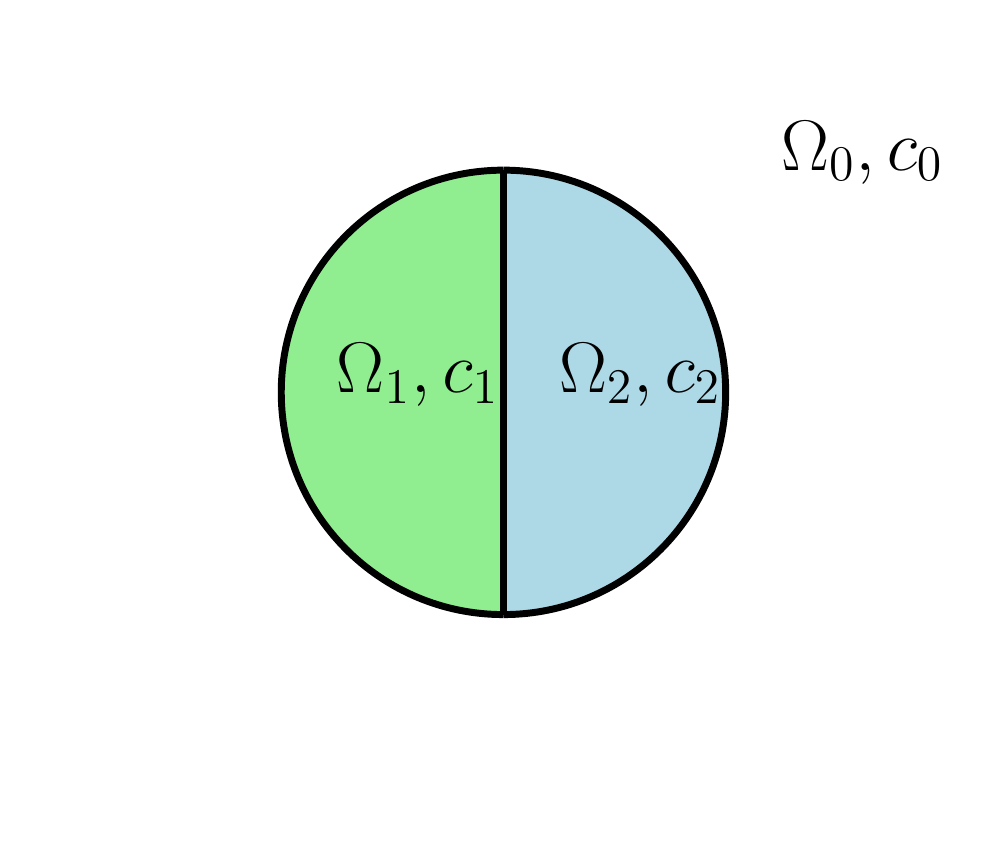}
		\caption{Circle with two subdomains.}
		\label{fig:domain_mtf1}
	\end{subfigure}
	\begin{subfigure}[c]{0.5\textwidth}
		\centering 
		\includegraphics[width=0.85\linewidth]{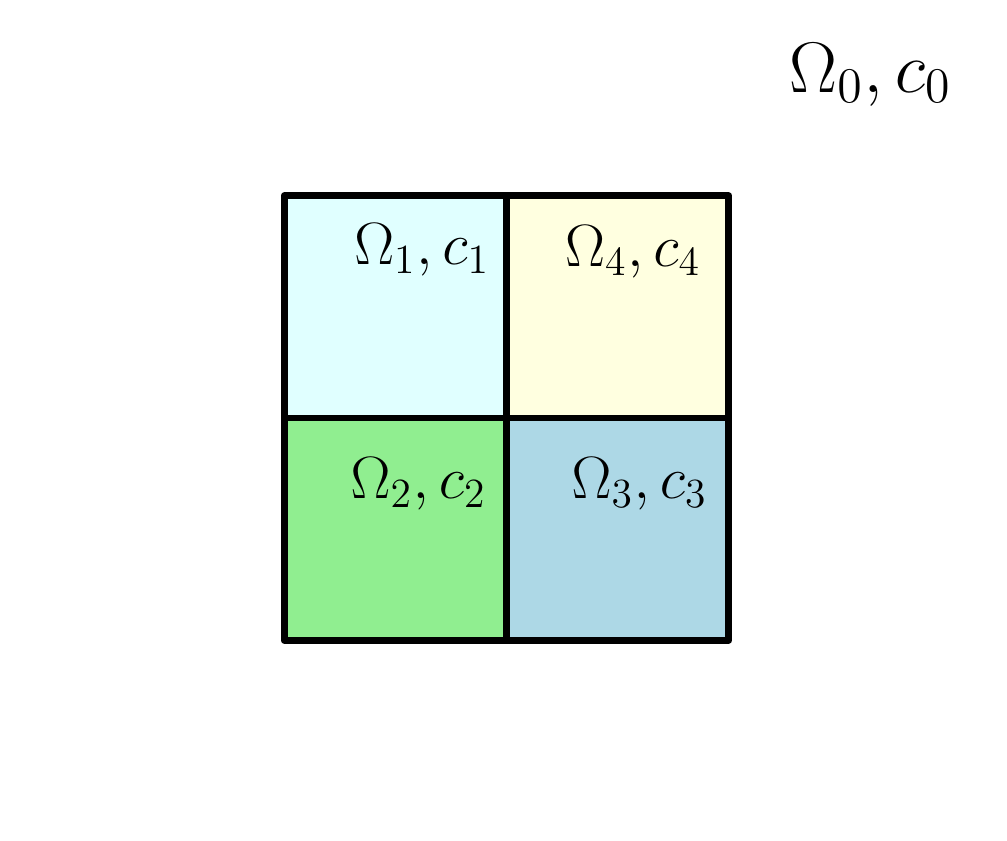}
		\caption{Square with four subdomains.}
		\label{fig:domain_mtf2}
	\end{subfigure}
	\caption{Geometries used for numerical experiments.}
	\label{fig:domains}
\end{figure}

\subsection{Spectral dicretization with Chebyshev polynomials}\label{sec:test_spectral}
First, we show convergence results for the non-conforming spectral discretization presented in Section \ref{sec:spectral}. Although we are using a spectral non-conforming discretization with Chebyshev polynomials, convergence rates are not expected to be exponential. This is related to the regularity of solutions of Helmholtz transmission problems in non-smooth domains \cite{grisvard2011elliptic}. Our aim is to show that we still obtain high-order convergence rates with accurate solutions using only a relatively small number of degrees of freedom.

We solve Problem \ref{MTF_freq} for a domain given by the circle with radius $ r = 0.5 $ with two subdomains (see Figure \ref{fig:domain_mtf1}). The volume problem to be solved by the MTF in these examples corresponds to
\begin{equation}
\left\{ 
\begin{array}{rcll}
-\Delta U + s_{i}^2 U &=& 0 & \text{in } \Omega_{i}, \ i = 0,1,2,\\
\jump{\gamma U}_{0i} &=& -\gamma U^{\text{inc}} |_{\Gamma_{0i}}, &\quad \qquad  i = 1, 2, \\
\jump{\gamma U}_{12} & = & 0,&\\
\jump{\gamma U}_{21} & = & 0,&
\end{array}\right. 
\end{equation}

\begin{table}[t]
	\centering 
	\caption{Parameters used in Section \ref{sec:test_spectral}. Convergence results are shown in Figure \ref{fig:errors_freq}.}
	\begin{tabular}{|l||c|c|c|c|c|}
		\hline\hline 
		&$ s_0 $ & $ s_1 $ & $ s_2 $ & $ U^{\text{inc}} $ & $ \bm{d} $ \\ \hline 
		\text{Example A (blue)}& $ -i $ & $ -2i $ & $ -4i $ & $ \exp(-s_0 \bm{x}\cdot \bm{d}) $ & $ (0, -1) $\\ \hline 
		\text{Example B (green)}& $ 1-i $ & $ 2-2i $ & $ 4-4i $ & $ \exp(-s_0 \bm{x}\cdot \bm{d}) $ & $ (0, -1) $\\\hline 
		\text{Example C (brown)}& $ 1-i $ & $ 10-10i $ & $ 20-20i $ & $ \exp(-s_0 \bm{x}\cdot \bm{d}) $ & $ (0, -1) $\\\hline 
		\text{Example D (purple)}& $ 1-i $ & $ 10-10i $ & $ 100-100i $ & $ \exp(-s_0 \bm{x}\cdot \bm{d}) $ & $ (0, -1) $\\\hline 
	\end{tabular}
	\label{tab:params_test_spectral}
\end{table}

The parameters employed are shown in Table \ref{tab:params_test_spectral}. Errors are measured with respect to a highly resolved solution. Convergence results are displayed in Figure \ref{fig:errors_freq}. Different values of $ s $ are used in order to study errors for different problems. As expected, Galerkin solutions initially converge spectrally and then reach a finite order convergence rate due to the non-smoothness of the domains. Small errors are obtained with few degrees of freedom, which is adequate for coupling with BDF2 and two-stage RadauIIA CQ methods. For LobattoIIIc, a higher number of degrees of freedom is required to match its high order convergence in time. In order to study convergence of CQ time-stepping, we fix a high polynomial degree to suppress errors due to the Laplace domain solver. 

\begin{figure}[t]
	\hspace{-0.7cm}
	\begin{subfigure}[c]{0.35\textwidth}
		\centering 
		\includegraphics[width=1.0\linewidth]{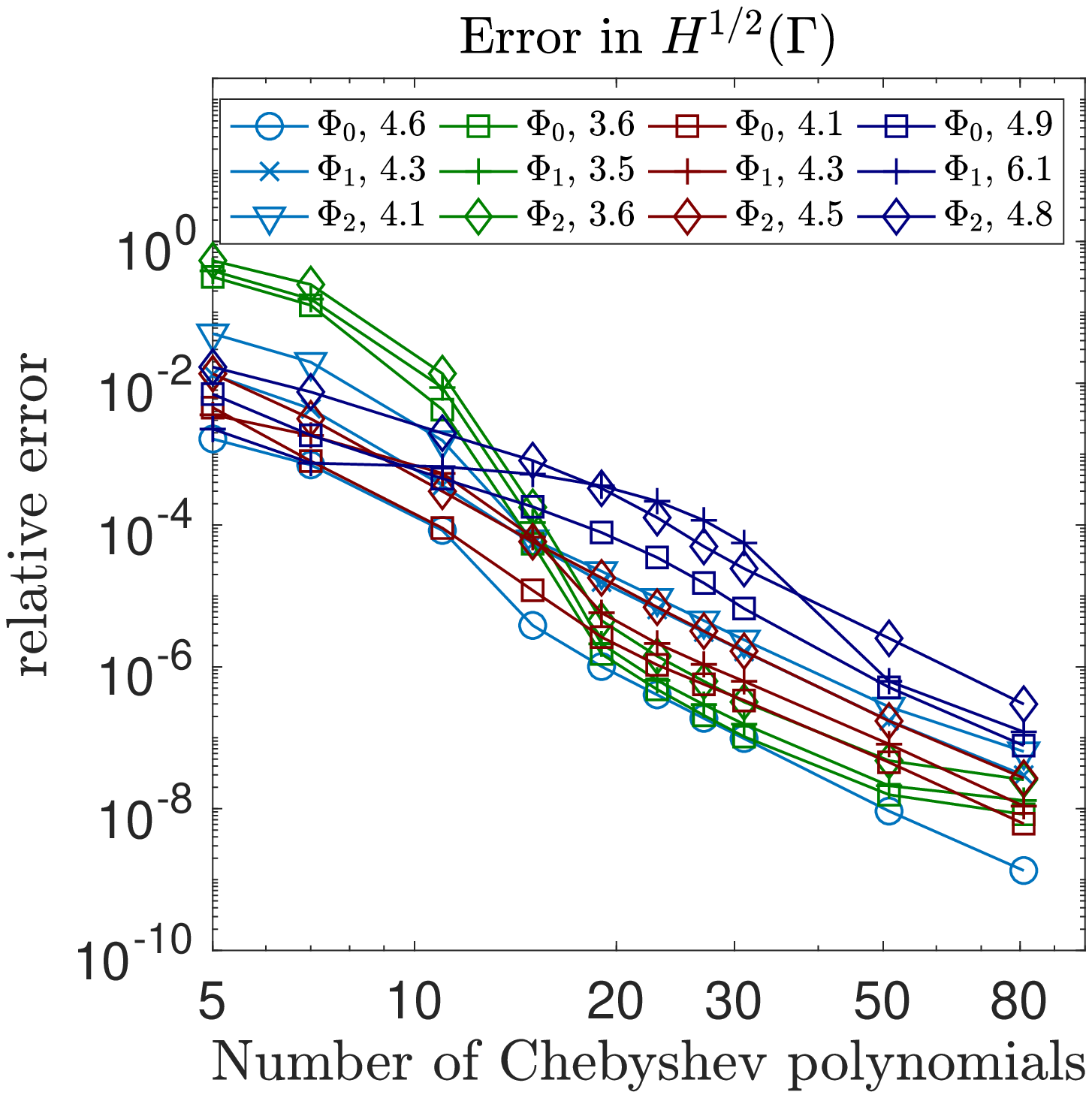}	
		\caption{Dirichlet trace errors} \label{fig:error_dirichlet_sp}
	\end{subfigure}
	\hspace{-0.3cm}
	\begin{subfigure}[c]{0.35\textwidth}
		\centering 
		\includegraphics[width=1.0\linewidth]{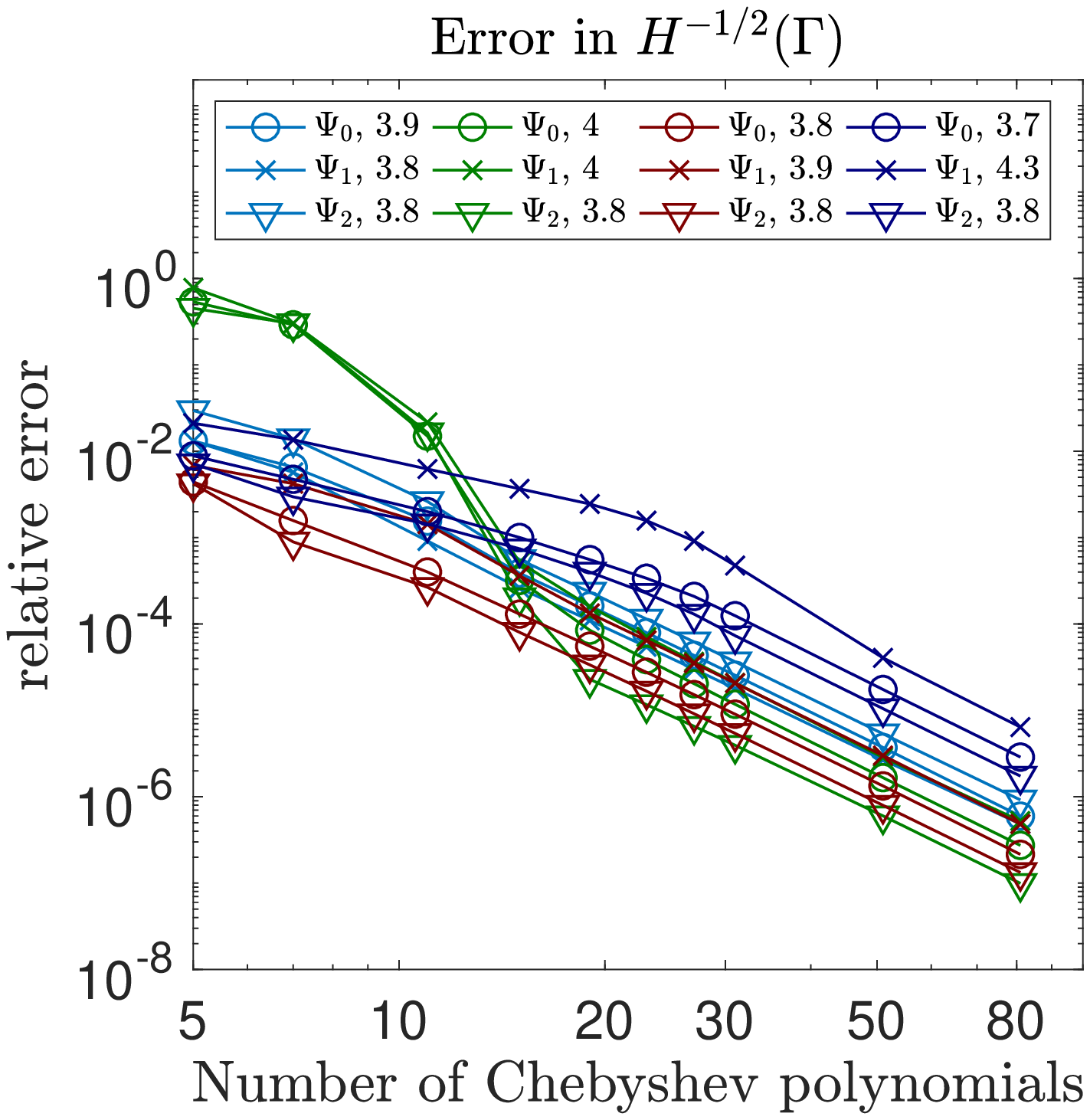}
		\caption{Neumann trace errors} \label{fig:error_neumann_sp}
	\end{subfigure}
	\hspace{-0.3cm}
	\begin{subfigure}[c]{0.35\textwidth}
		\centering 
		\includegraphics[width=1.0\linewidth]{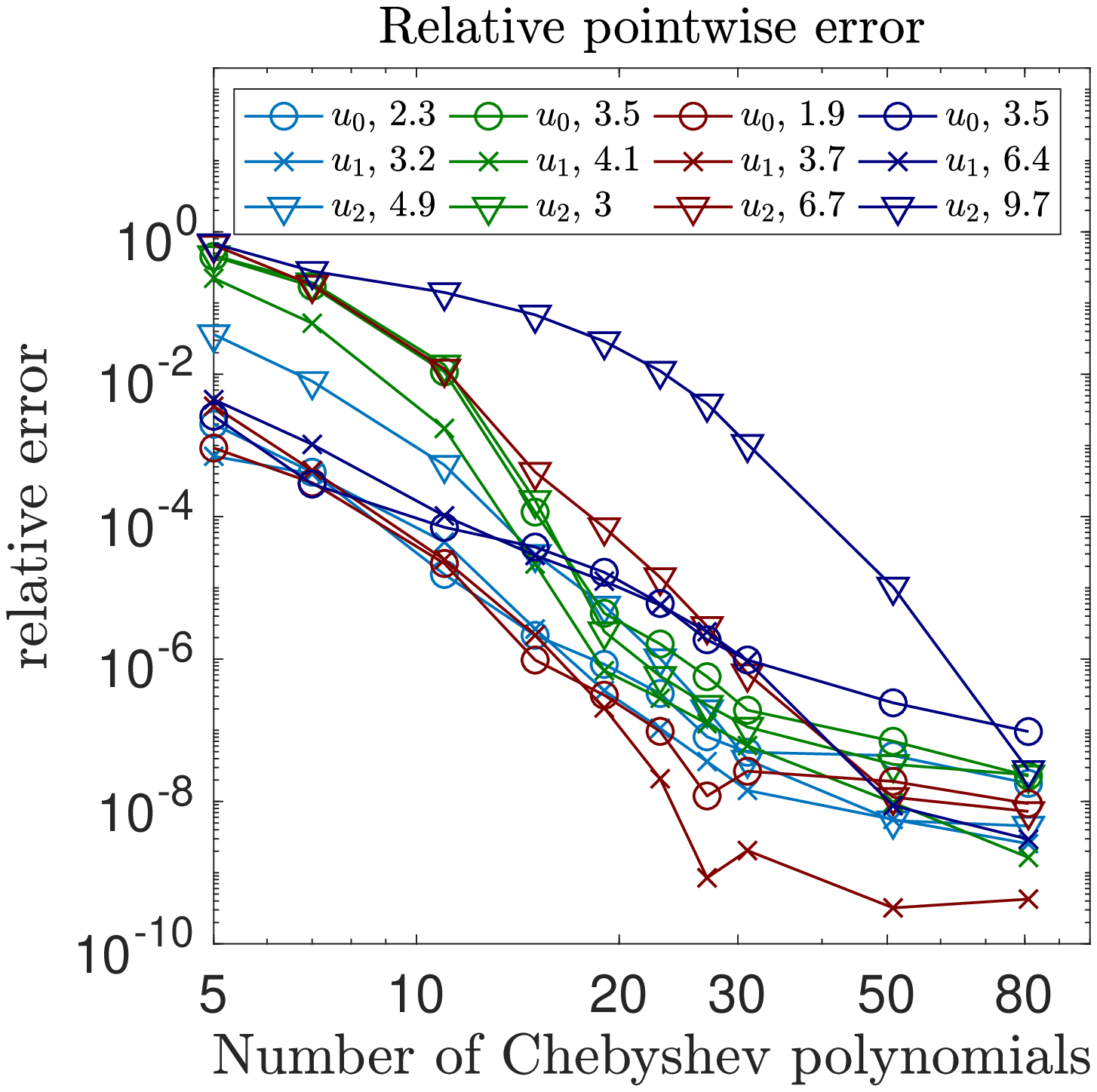}	
		\caption{Field errors} \label{fig:error_U_sp}
	\end{subfigure}
	\caption{Different relative error norms for the two subdomains case (see Figure \ref{fig:domain_mtf1})
 measured with respect to a highly resolved solution. Legends show estimated orders of convergence obtained by means of least squares fittings.}
	\label{fig:errors_freq}
\end{figure}

\subsection{Manufactured time-domain solutions with no triple points}\label{sec:test0}
We now analyze our time-domain CQ-MTF by considering the case of a single domain, i.e. no triple points. The domain $ \Omega $ consists of a circle of radius $ r = 0.5 $. We construct transmission conditions such that the exterior solution is zero whereas the interior one corresponds to
\begin{equation}\label{eq:manufactured0}
u_{1}(\bm{x}, t) := f(c_{1}t - t_{lag} - \bm{x}\cdot \bm{d}), \quad f(t) := \sin(\omega t)\eta(t, 0.2, 2), \quad i = 1,2,
\end{equation}
\begin{table}[t]
	\centering 
	
	\begin{tabular}{|c|c|c|c|c|c|c|}
		\hline\hline 
		$ c_0 $ & $ c_1 $ &$  \omega $ & $ T $ &$ t_{\text{lag}} $ &$ \bm{d} $& $ N_{\text{cheb}} $ \\ \hline 
		$ 1$ & $ 0.5 $ & $ 1 $& $ 5 $ & $ 0.5 $&$ (1, 0) $& $ 40 $\\ \hline 
	\end{tabular}
	\caption{Parameters used in Section \ref{sec:test0}.}
	\label{tab:params_test0}
\end{table} where the parameters used are provided in Table \ref{tab:params_test0}. Therein, $ \eta $ is a smooth version of the Heaviside function defined as
\begin{equation}
\eta (t, t_0, t_1) := \left\{   
\begin{array}{cl}
0 & 0 \leq t < t_0, \\ 
1-\exp\left(\dfrac{2e^{-1/\tau}}{\tau-1}\right), \ \tau = \dfrac{t-t_0}{t_1-t_0}, & t_0 < t < t_1, \\
1 & t > t_1. \\
\end{array}
\right.
\end{equation}

Convergence results for BDF2 and Runge-Kutta CQ methods are displayed in Figure \ref{fig:analytic_sol-simple}. Classical orders of convergence are restricted to the exterior domain (Fig. \ref{fig:error_U_0}), whereas for interior ones, CQ methods suffer from reduced convergence rates (Fig. \ref{fig:error_dirichlet_0} \ref{fig:error_neumann_0}). We observe that the three implemented methods share the same convergence rates for Neumann traces. This is explained by the low temporal regularity of the solutions, as normally the methods would achieve different orders of convergence.

\begin{figure}[t]
	\hspace{-0.7cm}
	\begin{subfigure}[c]{0.35\textwidth}
		\centering 
		\includegraphics[width=1.0\linewidth]{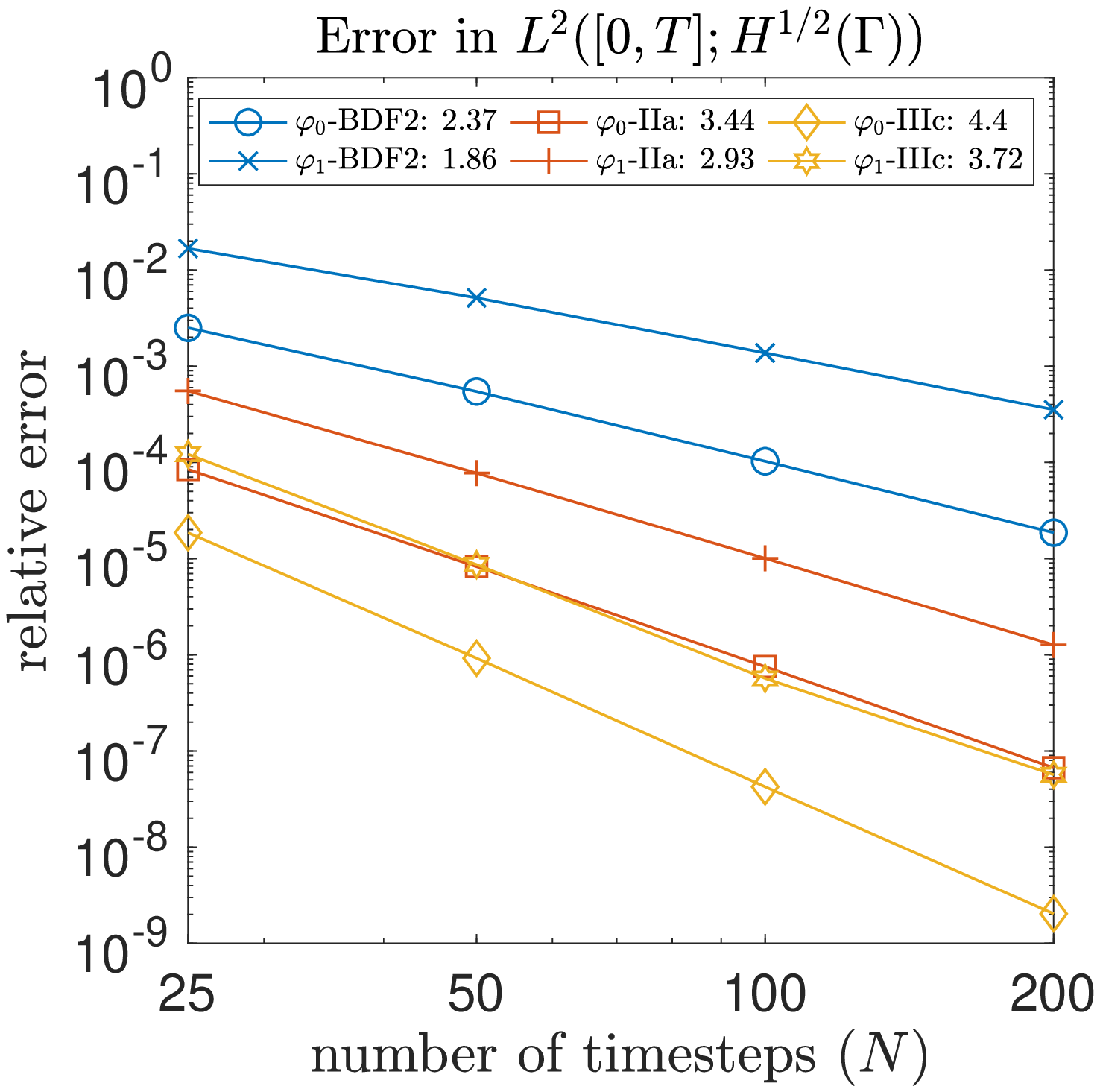}	
		\caption{Dirichlet trace errors} \label{fig:error_dirichlet_0}
	\end{subfigure}
	\hspace{-0.3cm}
	\begin{subfigure}[c]{0.35\textwidth}
		\centering 
		\includegraphics[width=1.0\linewidth]{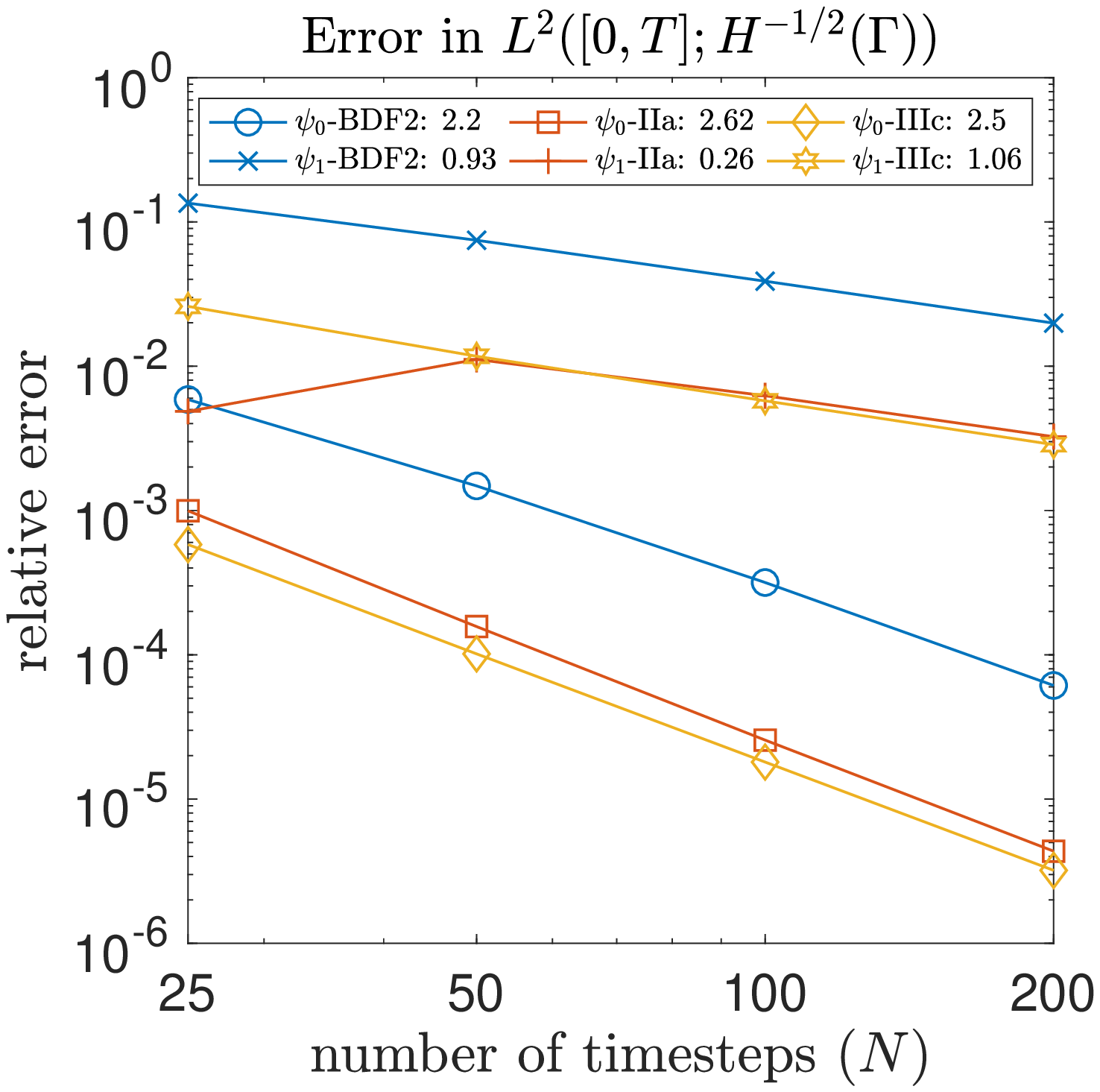}
		\caption{Neumann trace errors} \label{fig:error_neumann_0}
	\end{subfigure}
	\hspace{-0.3cm}
	\begin{subfigure}[c]{0.35\textwidth}
		\centering 
		\includegraphics[width=1.0\linewidth]{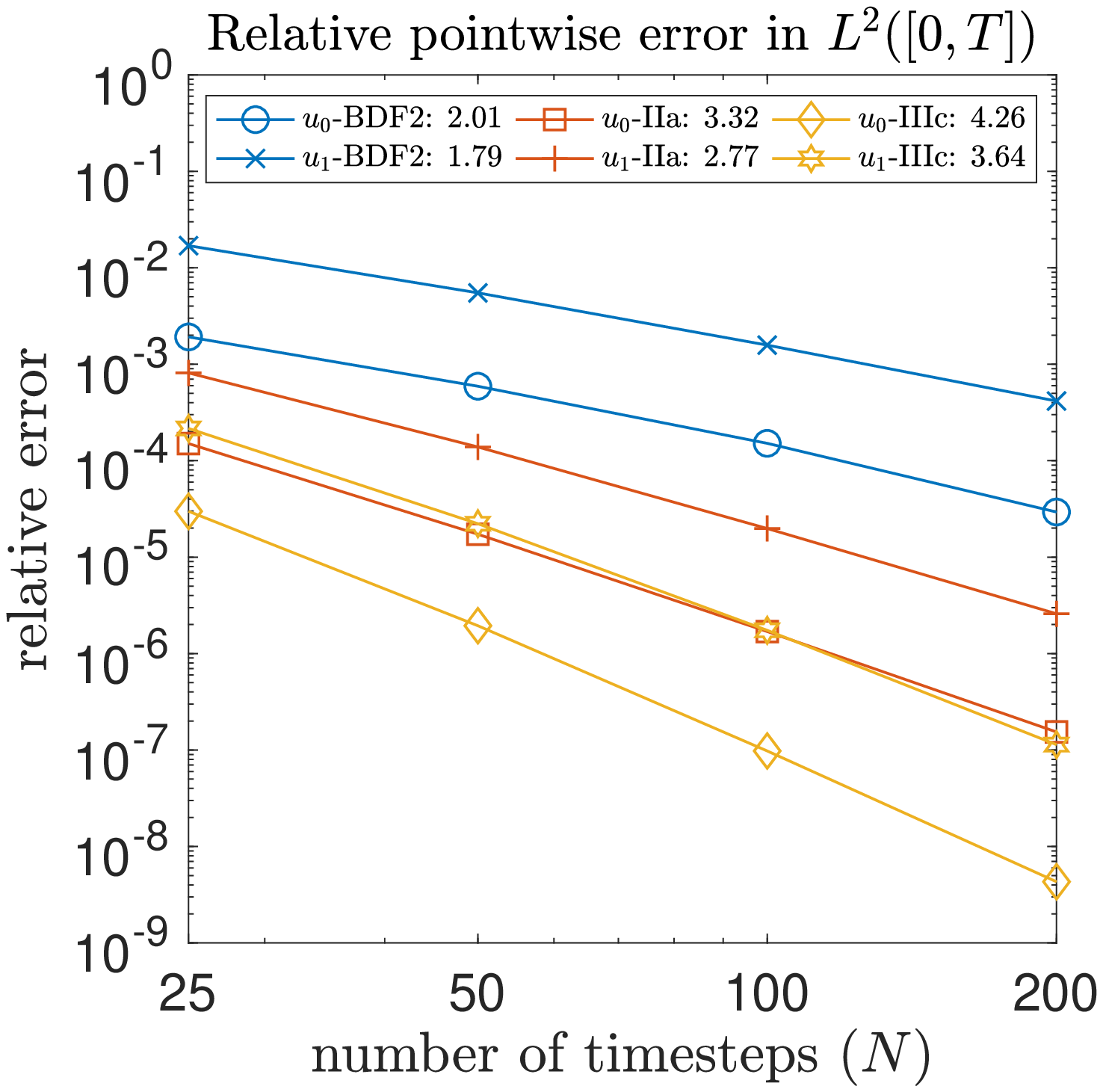}	
		\caption{Field errors} \label{fig:error_U_0}
	\end{subfigure}
	\caption{Relative errors for the transmission problem measured with respect to the analytical solution. Errors are computed as explained in \eqref{eq:trace_error} and \eqref{eq:errorU}. Legends show the estimated order of convergence obtained by means of least-squares fittings.}
	\label{fig:analytic_sol-simple}
\end{figure}

\subsection{Manufactured time-domain solutions with artificial subdomains}\label{sec:test1}

We now validate the use of CQ-MTF by considering the case of artificial subdomains. The domain $ \Omega $ consists in a circle of radius $ r = 0.5 $ separated into two subdomains $ \Omega_1 $ and $ \Omega_2 $ --left and right semicircles, respectively in Figure \ref{fig:domain_mtf1}. We use the same manufactured solutions from Section \ref{sec:test0}. As the physical parameters of both subdomains are identical, the only difference from the case in Section \ref{sec:test0} is the presence of an artificial triple point.

\begin{table}[t]
	\centering 
	\begin{tabular}{|c|c|c|c|c|c|c|c|}
		\hline\hline 
		$ c_0 $ & $ c_1 $ & $ c_2 $ &$  \omega $ & $ T $ &$ t_{\text{lag}} $ &$ \bm{d} $& $ N_{\text{cheb}} $ \\ \hline 
		$ 1$ & $ 0.5 $ &$ 0.5 $& $ 1 $& $ 5 $ & $ 0.5 $&$ (1, 0) $& $ 40 $\\ \hline 
	\end{tabular}
		\caption{Parameters used in Section \ref{sec:test1}.}
	\label{tab:params_test1}
\end{table}

Error convergence results for BDF2 and Runge-Kutta-based CQ (RadauIIa and LobattoIIIc) are shown in Figure \ref{fig:analytic_sol}. Only small differences can be pointed out with respect to the results of Section \ref{sec:test0}, in particular, for the convergence of Neumann traces as they appear to improve for RadauIIa. However, these results are not sufficient to state that increasing the number of subdomains artificially impacts the convergence of the method.

\begin{figure}[t]
	\hspace{-0.7cm}
	\begin{subfigure}[c]{0.35\textwidth}
		\centering 
		\includegraphics[width=1.0\linewidth]{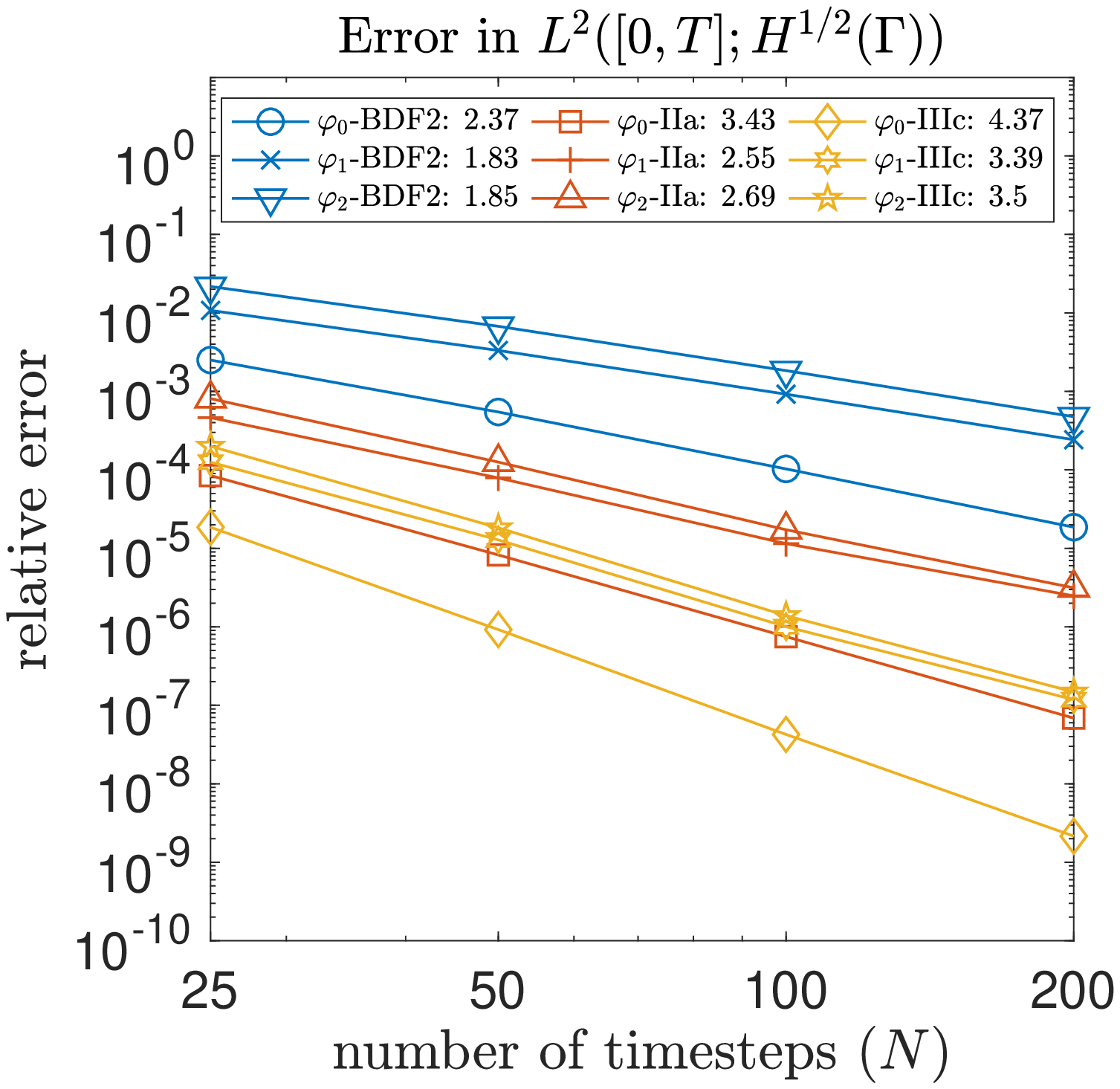}	
		\caption{Dirichlet trace errors} \label{fig:error_dirichlet_1}
	\end{subfigure}
	\hspace{-0.3cm}
	\begin{subfigure}[c]{0.35\textwidth}
		\centering 
		\includegraphics[width=1.0\linewidth]{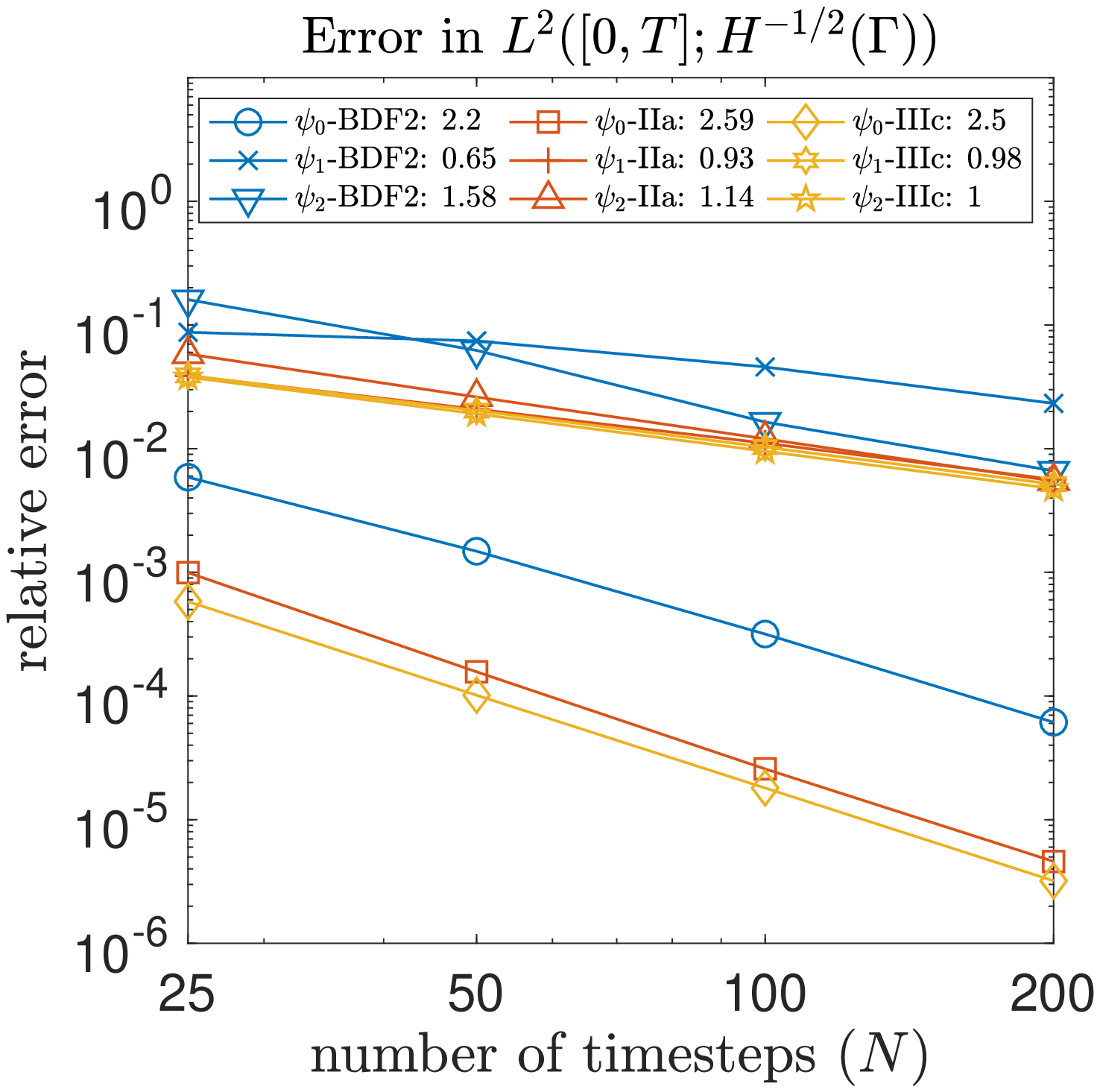}
		\caption{Neumann trace errors} \label{fig:error_neumann_1}
	\end{subfigure}
	\hspace{-0.3cm}
	\begin{subfigure}[c]{0.35\textwidth}
		\centering 
		\includegraphics[width=1.0\linewidth]{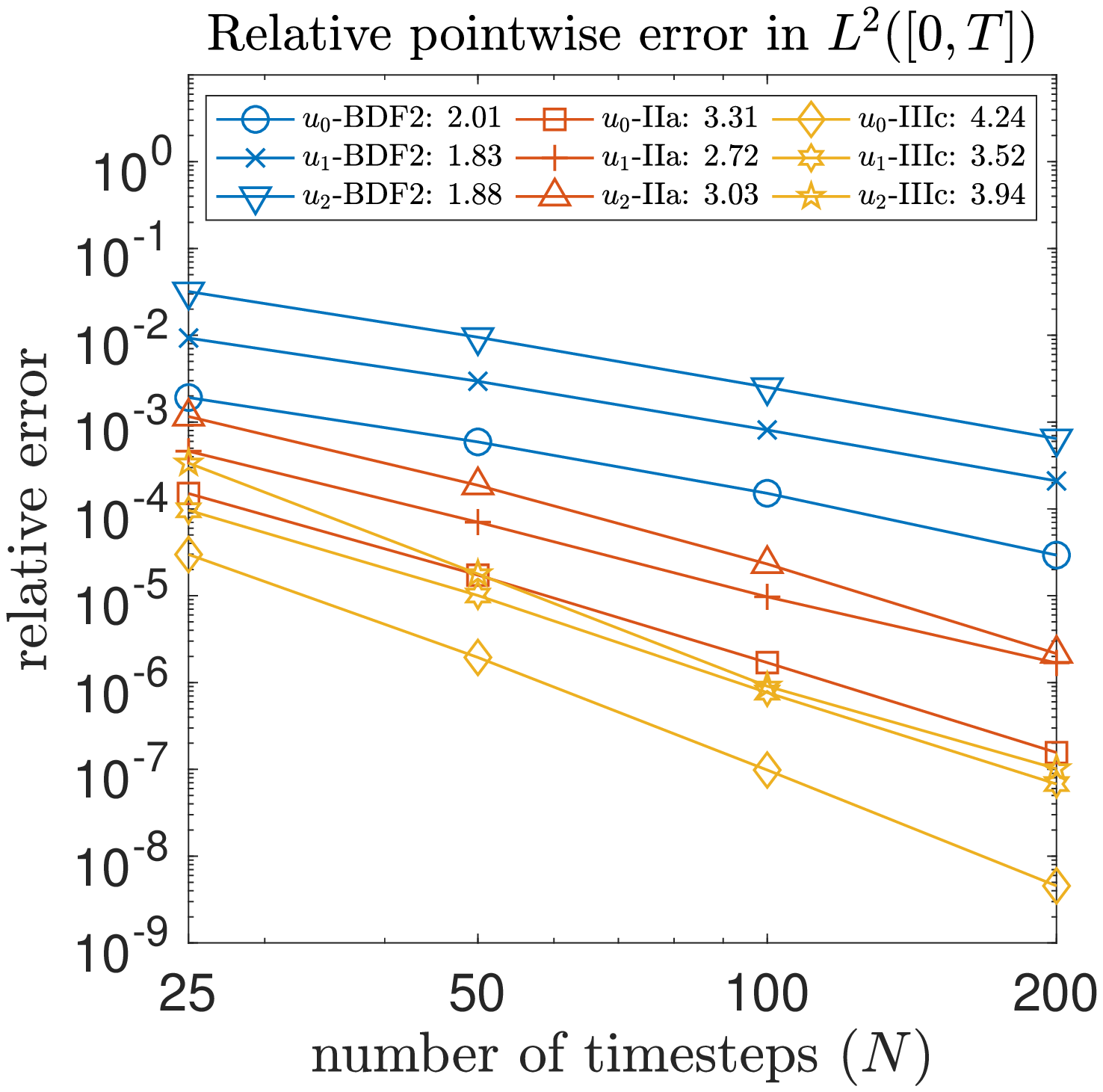}	
		\caption{Field errors} \label{fig:error_U_1}
	\end{subfigure}
	\caption{Relative errors for the two artificial-subdomains case measured with respect to the analytical solution. Errors are computed as  explained in \eqref{eq:trace_error} and \eqref{eq:errorU}. Legends show estimated order of convergence obtained by means of least squares fittings.}
	\label{fig:analytic_sol}
\end{figure}

\subsection{Incident plane wave over a circle with two subdomains\label{sec:example1}}
We now consider as incident field a plane wave coming from $ \Omega_0 $, defined as
\begin{equation}\label{eq:planewave}
u^{\text{inc}}(\bm{x}, t) = f(c_0(t - t_{\text{lag}}) - \bm{x} \cdot \mathbf{d}), \quad f(t) = \sin(\omega t)\eta(t, 0.2, 2).
\end{equation}
The domain is a circle of radius $ r = 0.5 $ divided into two subdomains from Figure \ref{fig:domain_mtf1}, with parameters used are shown in Table \ref{tab:params_test2}.
\begin{table}[t]
	\centering 
	\begin{tabular}{|c|c|c|c|c|c|c|c|}
		\hline\hline 
		$ c_0 $ & $ c_1 $ & $ c_2 $ &$  \omega $ & $ T $ &$ t_{\text{lag}} $ &$ \bm{d} $&$ N_{\text{cheb}} $ \\ \hline 
		$ 1$ & $ 0.5 $ &$ 0.25 $& $ 8 $& $ 10 $ & $ 0.5 $&$ (\sqrt{0.5}, -\sqrt{0.5}) $& $ 80 $\\ \hline 
	\end{tabular}
	\caption{Parameters used in Section \ref{sec:example1}.}
	\label{tab:params_test2}
\end{table}

Error convergence results with respect to a highly resolved solution for each subdomain are shown in Figure \ref{fig:num_sol1}. We observe that the second order convergence for the BDF2 method is not achieved until a high number of timesteps is reached due to the highly oscillatory behavior of the incident field. Runge-Kutta methods show the expected order of convergence for Dirichlet traces and for the scattered field, having even better results for the case of RadauIIa. Neumann traces present a slower convergence, as in previous examples. This behaviour is related to the stage order of convergence in Runge-Kutta methods, which is the same for two-stages RadauIIa and three-stages LobattoIIIc ($ q = 2 $). Some snapshots of the solution are shown in Figure \ref{fig:plot_circle}.

\begin{figure}[t]
	\hspace{-0.7cm}
	\begin{subfigure}[c]{0.35\textwidth}
		\centering 
		\includegraphics[width=1.0\linewidth]{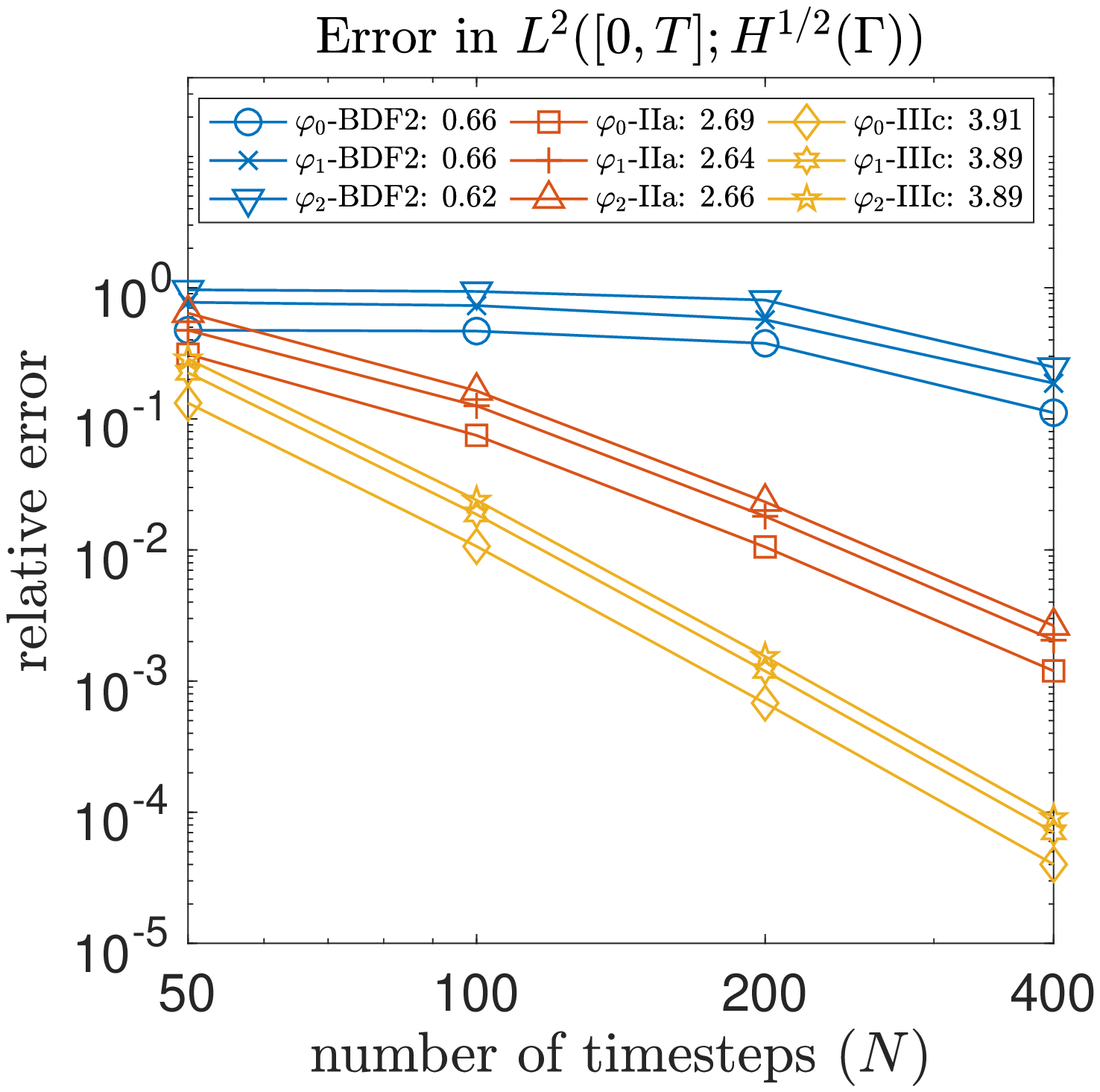}	
		\caption{Dirichlet trace errors} \label{fig:error_dirichlet_2}
	\end{subfigure}
	\hspace{-0.3cm}
	\begin{subfigure}[c]{0.35\textwidth}
		\centering 
		\includegraphics[width=1.0\linewidth]{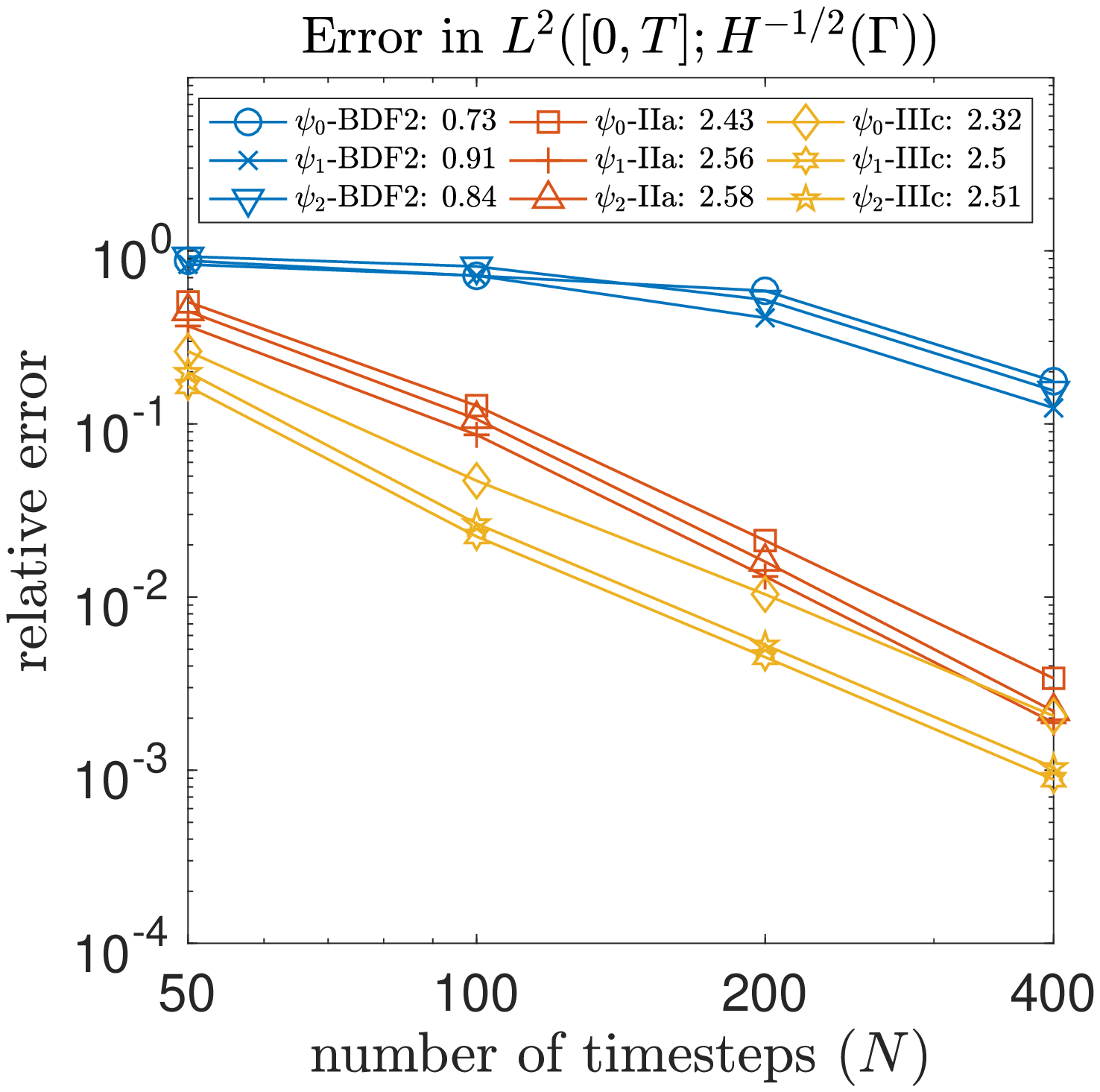}
		\caption{Neumann trace errors} \label{fig:error_neumann_2}
	\end{subfigure}
	\hspace{-0.3cm}
	\begin{subfigure}[c]{0.35\textwidth}
		\centering 
		\includegraphics[width=1.0\linewidth]{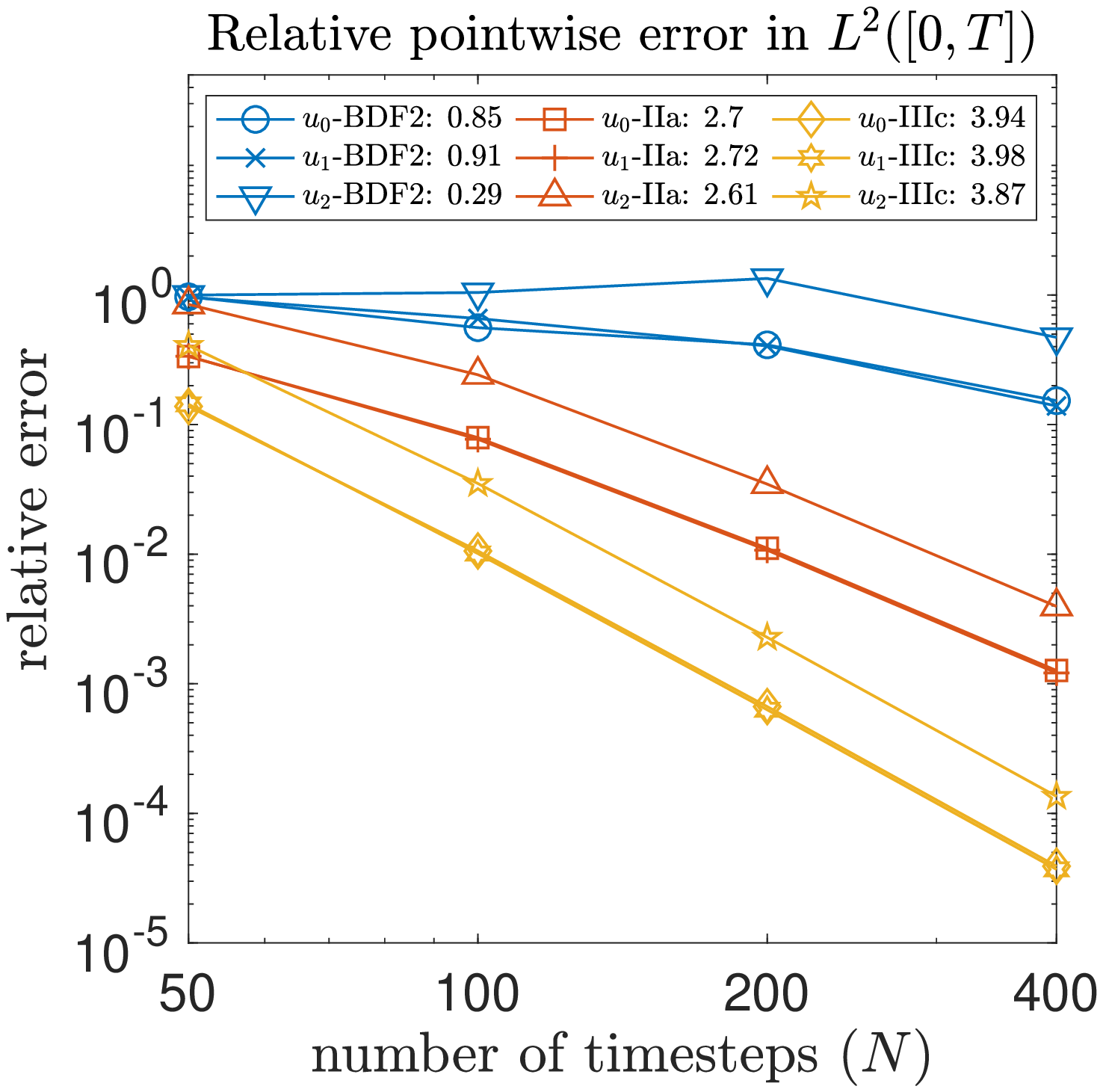}	
		\caption{Field errors} \label{fig:error_U_2}
	\end{subfigure}
	\caption{Relative errors for the two subdomains case measured with respect to a highly resolved solution. Errors are computed as  explained in \eqref{eq:trace_error} and \eqref{eq:errorU}. Legends show methods employed with estimated orders of convergence obtained by means of least squares fittings.}
	\label{fig:num_sol1}
\end{figure}

\begin{figure}[t]
	\centering 
	\begin{tabular}{ccc}
		\includegraphics[width=0.15\linewidth]{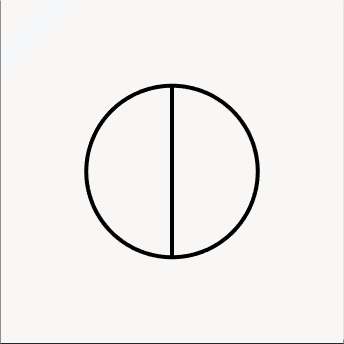}	& \includegraphics[width=0.15\linewidth]{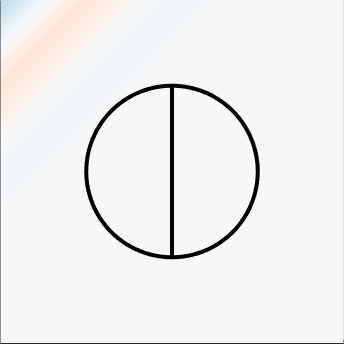} & \includegraphics[width=0.15\linewidth]{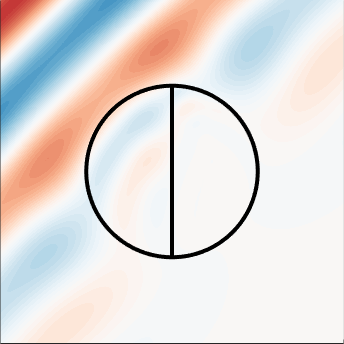} \\ 
		\includegraphics[width=0.15\linewidth]{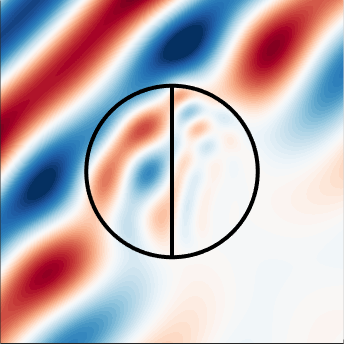}	& \includegraphics[width=0.15\linewidth]{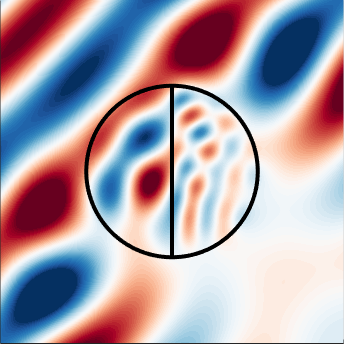} & \includegraphics[width=0.15\linewidth]{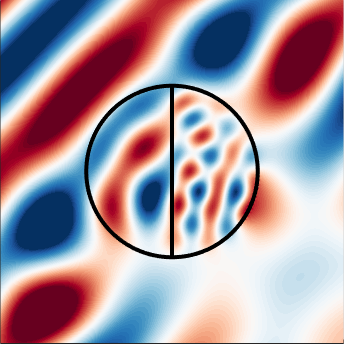} \\ 
		\includegraphics[width=0.15\linewidth]{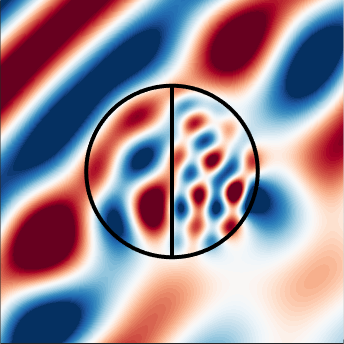}	& \includegraphics[width=0.15\linewidth]{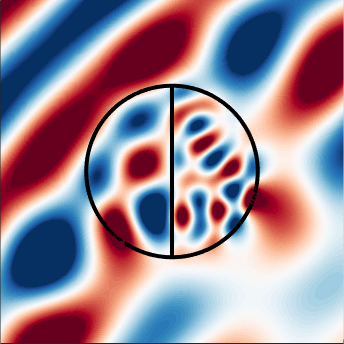} &  \includegraphics[width=0.15\linewidth]{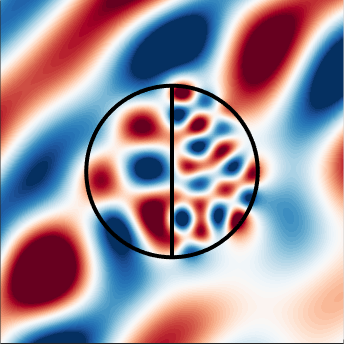}\\ 
	\end{tabular} 
	\caption{Snapshots of the computed field from problem in Section \ref{sec:example1} for the times $$ t = 0,\ 1.25, \ 2.5, \ 3.75, \ 5, \ 6.25, \ 7.5, \ 8.75, \ 10. $$}
	\label{fig:plot_circle}
\end{figure}

\subsection{Incident Plane Wave over a Square with Four Subdomains}
\label{sec:example2}
In this experiment, the incident field is the same plane wave from \eqref{eq:planewave} coming from $ \Omega_0 $ with parameters used are $ t_{lag}= 0.5, \ \bm{d}= (\sqrt{0.5}, -\sqrt{0.5})$ and $ \omega = 8. $ The domain is a square of side length $ a = 1 $ divided into four subdomains (see Figure \ref{fig:domain_mtf2} ). Wavespeeds on each subdomain are $ c_0 = 1, c_1 = 0.5, c_2 = 0.25, c_3 = 0.5, c_4 = 0.25$. 

\begin{table}[t]
	\centering 
	\caption{Parameters used  in Section \ref{sec:example2}.}
	\begin{tabular}{|c|c|c|c|c|c|c|c|c|c|}
		\hline\hline 
		$ c_0 $ & $ c_1 $ & $ c_2 $ &$ c_3 $&$ c_4 $&$  \omega $ & $ T $ &$ t_{\text{lag}} $ &$ \bm{d} $&$ N_{\text{cheb}} $ \\ \hline 
		$ 1$ & $ 0.5 $ &$ 0.25 $& $ 0.5 $&$ 0.25 $ & $ 8 $& $ 10 $ & $ 0.5 $&$ (\sqrt{0.5}, -\sqrt{0.5}) $& $ 20 $\\ \hline 
	\end{tabular}
	\label{tab:params_test3}
\end{table}

Convergence results for each interface are shown in Figure \ref{fig:num_sol2}. We observe a similar behaviour to the previous example, with slow convergence for the BDF2 method. Snapshots of the volume solution are displayed in Figure \ref{fig:plot_square}.

\begin{figure}[t]
	\hspace{-0.7cm}
	\begin{subfigure}[c]{0.35\textwidth}
		\centering 
		\includegraphics[width=1.0\linewidth]{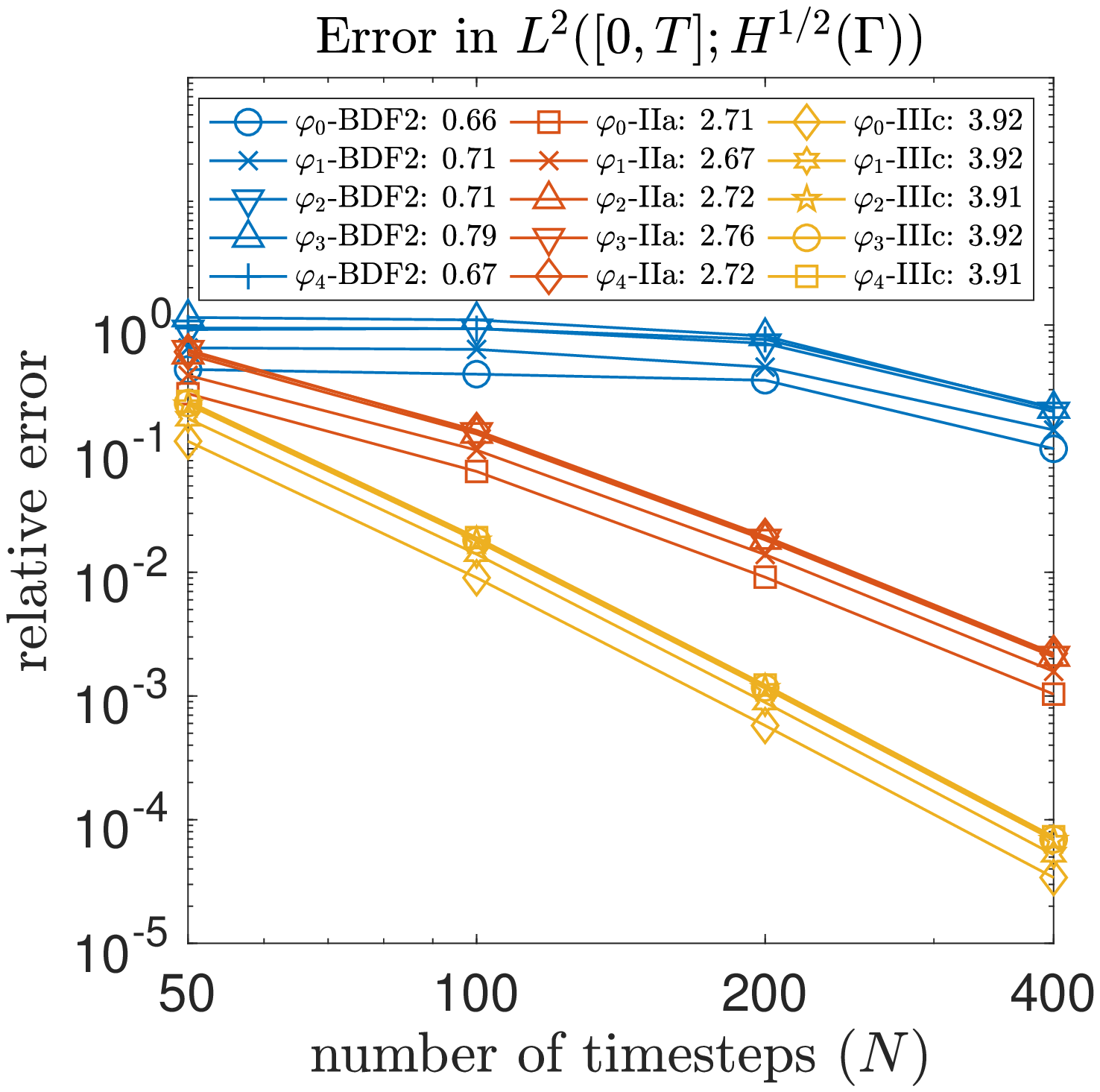}	
		\caption{Dirichlet trace errors} \label{fig:error_dirichlet_3}
	\end{subfigure}
	\hspace{-0.3cm}
	\begin{subfigure}[c]{0.35\textwidth}
		\centering 
		\includegraphics[width=1.0\linewidth]{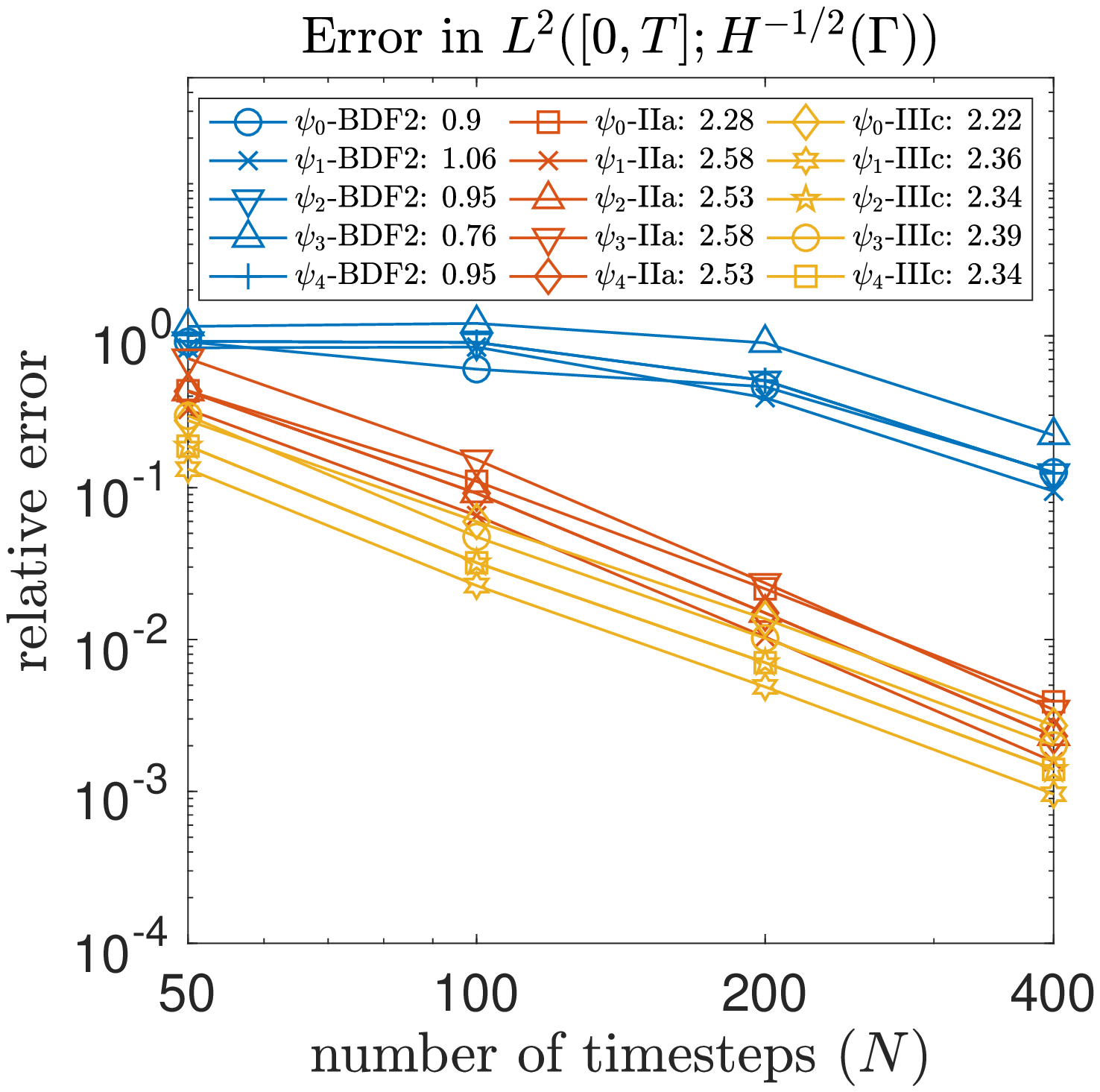}
		\caption{Neumann trace errors} \label{fig:error_neumann_3}
	\end{subfigure}
	\hspace{-0.3cm}
	\begin{subfigure}[c]{0.35\textwidth}
		\centering 
		\includegraphics[width=1.0\linewidth]{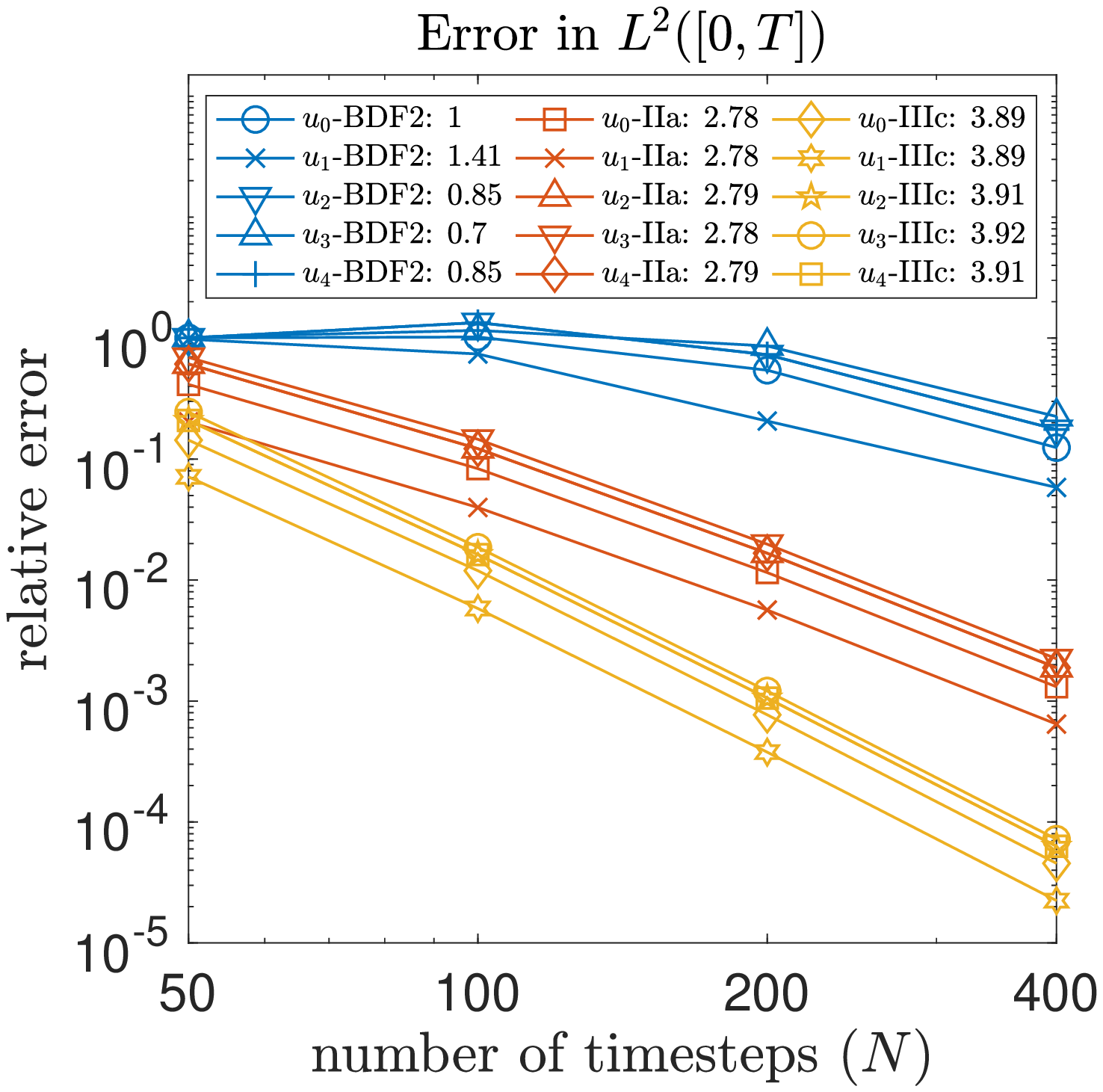}	
		\caption{Field errors} \label{fig:error_U_3}
	\end{subfigure}
	\caption{Relative errors for the four subdomains case measured with respect to a highly resolved solution. Errors are computed as  explained in \eqref{eq:trace_error} and \eqref{eq:errorU}. Legends indicate estimated order of convergence obtained by means of least squares fittings.}
	\label{fig:num_sol2}
\end{figure}

\begin{figure}[t]
	\centering
	\begin{tabular}{ccc}
		\includegraphics[width=0.15\linewidth]{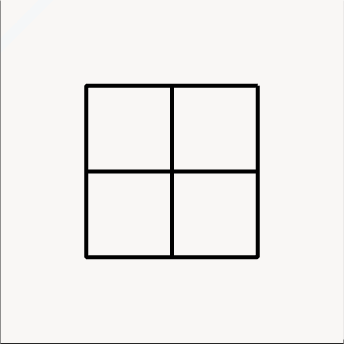}	& \includegraphics[width=0.15\linewidth]{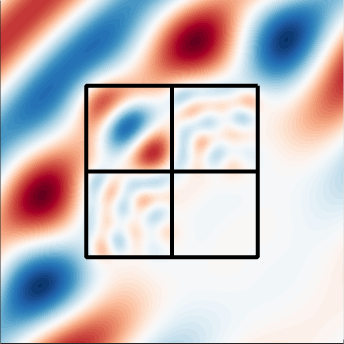} & \includegraphics[width=0.15\linewidth]{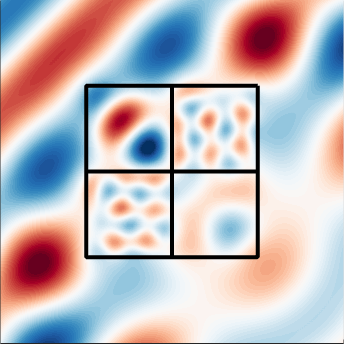} \\ 
		\includegraphics[width=0.15\linewidth]{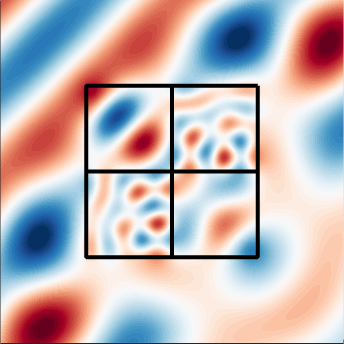}	& \includegraphics[width=0.15\linewidth]{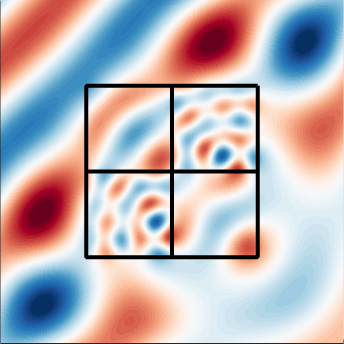} & \includegraphics[width=0.15\linewidth]{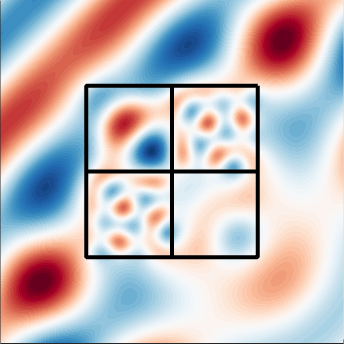} \\ 
		\includegraphics[width=0.15\linewidth]{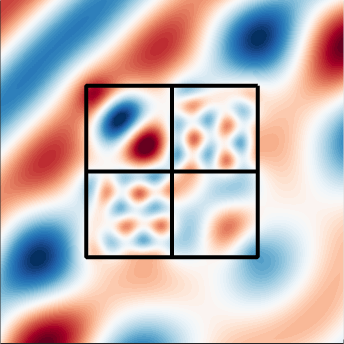}	& \includegraphics[width=0.15\linewidth]{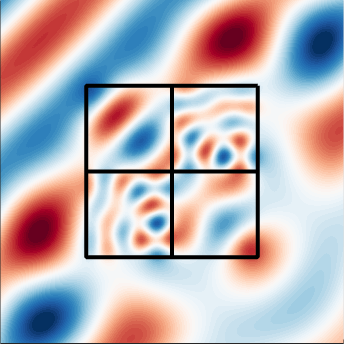} &  \includegraphics[width=0.15\linewidth]{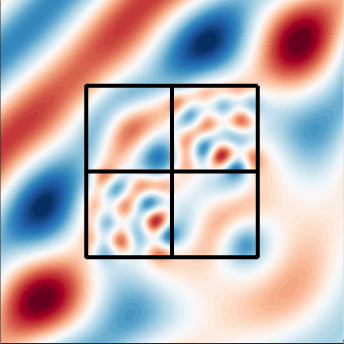}\\ 
	\end{tabular} 
	\caption{Snapshots of the computed field from problem in Section \ref{sec:example2} for times $$ t = 0,\ 1.25, \ 2.5, \ 3.75, \ 5, \ 6.25, \ 7.5, \ 8.75, \ 10.0. $$}
	\label{fig:plot_square}
\end{figure}

\subsection{Incident plane wave over kite with two subdomains}
\label{sec:example3}
For our last experiment, the incident field is the same plane wave from \eqref{eq:planewave} coming from $ \Omega_0 $. The domain is a smooth kite parametrized as follows
\begin{equation}
\begin{pmatrix}
x(t)\\ y(t)
\end{pmatrix} = \begin{pmatrix}
\cos(t) + 0.65\cos(2t)\\ 
\sin(t)
\end{pmatrix}, \quad t\in [0, 2\pi]
\end{equation}
divided into two subdomains $ \Omega_1 $ and $ \Omega_2 $ (corresponding to upper and lower halves) with symmetry over the $ x- $axis. Parameters used are shown in Table \ref{tab:params_test4}. Convergence results for each interface are shown in Figure \ref{fig:num_sol3}.  Snapshots of the volume solution are displayed in Figure \ref{fig:plot_kite}.

\begin{table}[t]
	\centering 
	\caption{Parameters used in Section \ref{sec:example3}.}
	\begin{tabular}{|c|c|c|c|c|c|c|c|}
		\hline\hline 
		$ c_0 $ & $ c_1 $ & $ c_2 $ &$  \omega $ & $ T $ &$ t_{\text{lag}} $ &$ \bm{d} $&$ N_{\text{cheb}} $ \\ \hline 
		$ 1$ & $ 0.5 $ &$ 0.25 $& $ 8 $& $ 10 $ & $ 0.5 $&$ (1,0) $& $ 80 $\\ \hline 
	\end{tabular}
	\label{tab:params_test4}
\end{table}

\begin{figure}[t]
	\hspace{-0.7cm}
	\begin{subfigure}[c]{0.35\textwidth}
		\centering 
		\includegraphics[width=1.0\linewidth]{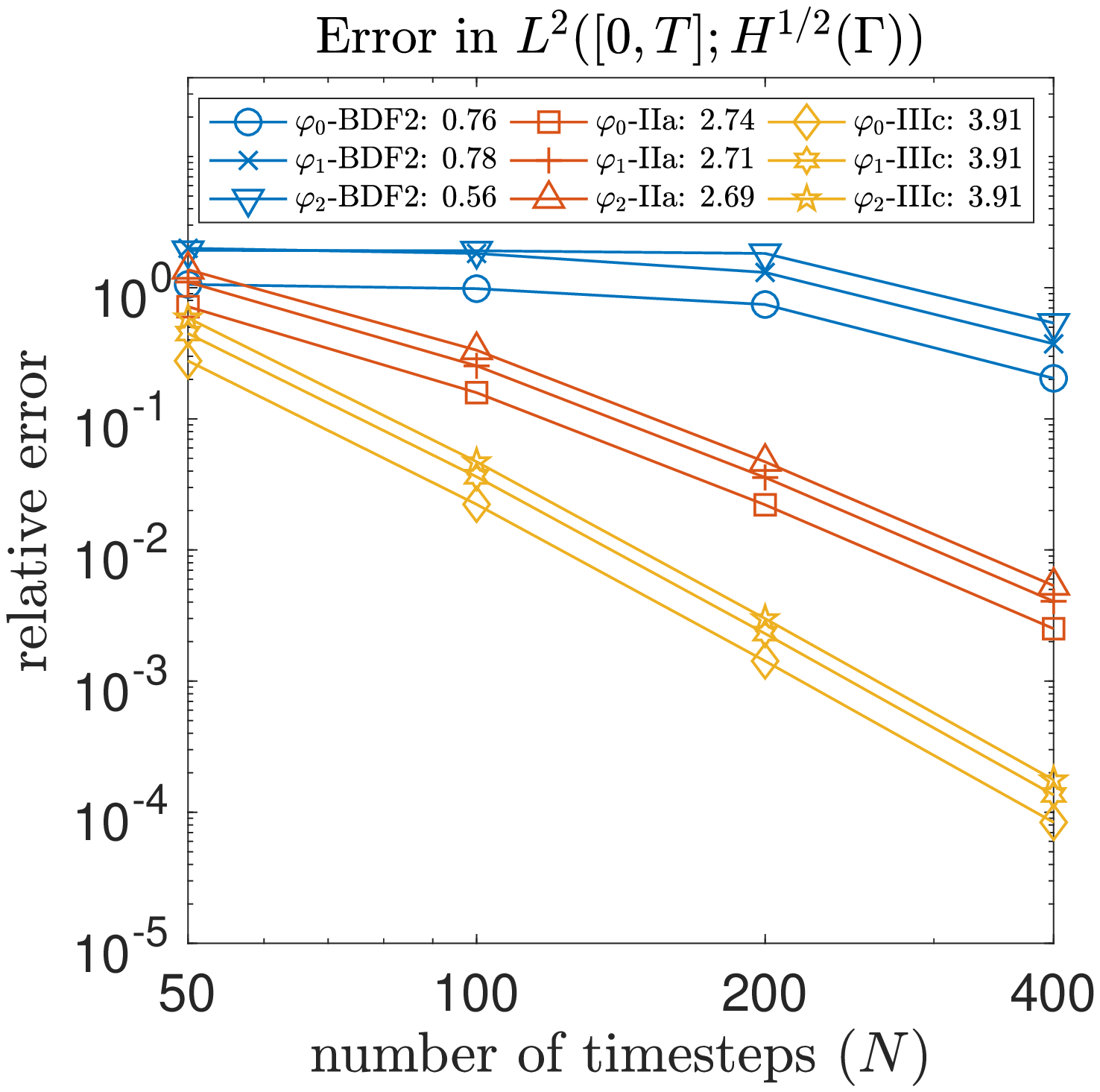}	
		\caption{Dirichlet trace errors} \label{fig:error_dirichlet_kite}
	\end{subfigure}
	\hspace{-0.3cm}
	\begin{subfigure}[c]{0.35\textwidth}
		\centering 
		\includegraphics[width=1.0\linewidth]{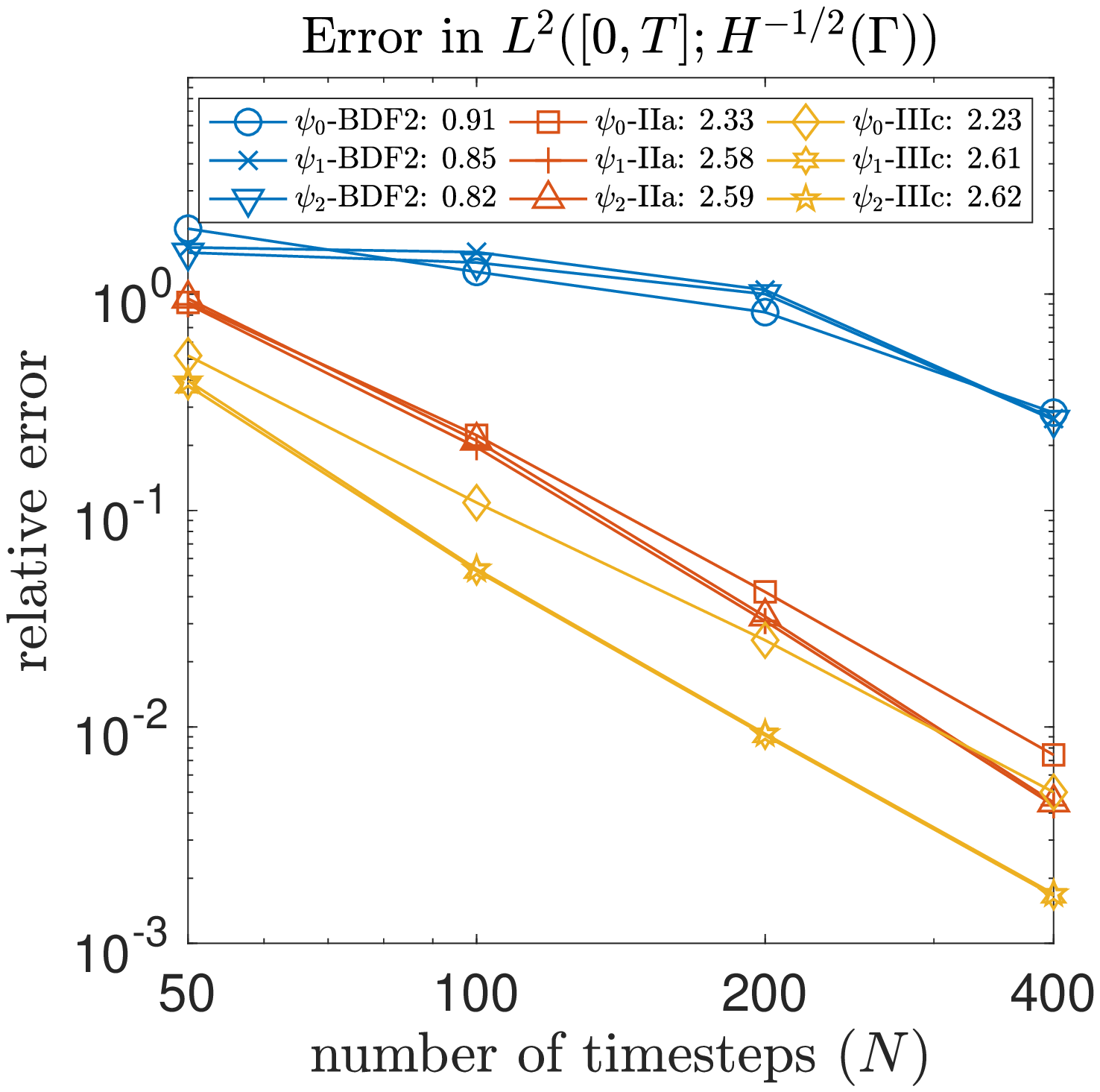}
		\caption{Neumann trace errors} \label{fig:error_neumann_kite}
	\end{subfigure}
	\hspace{-0.3cm}
	\begin{subfigure}[c]{0.35\textwidth}
		\centering 
		\includegraphics[width=1.0\linewidth]{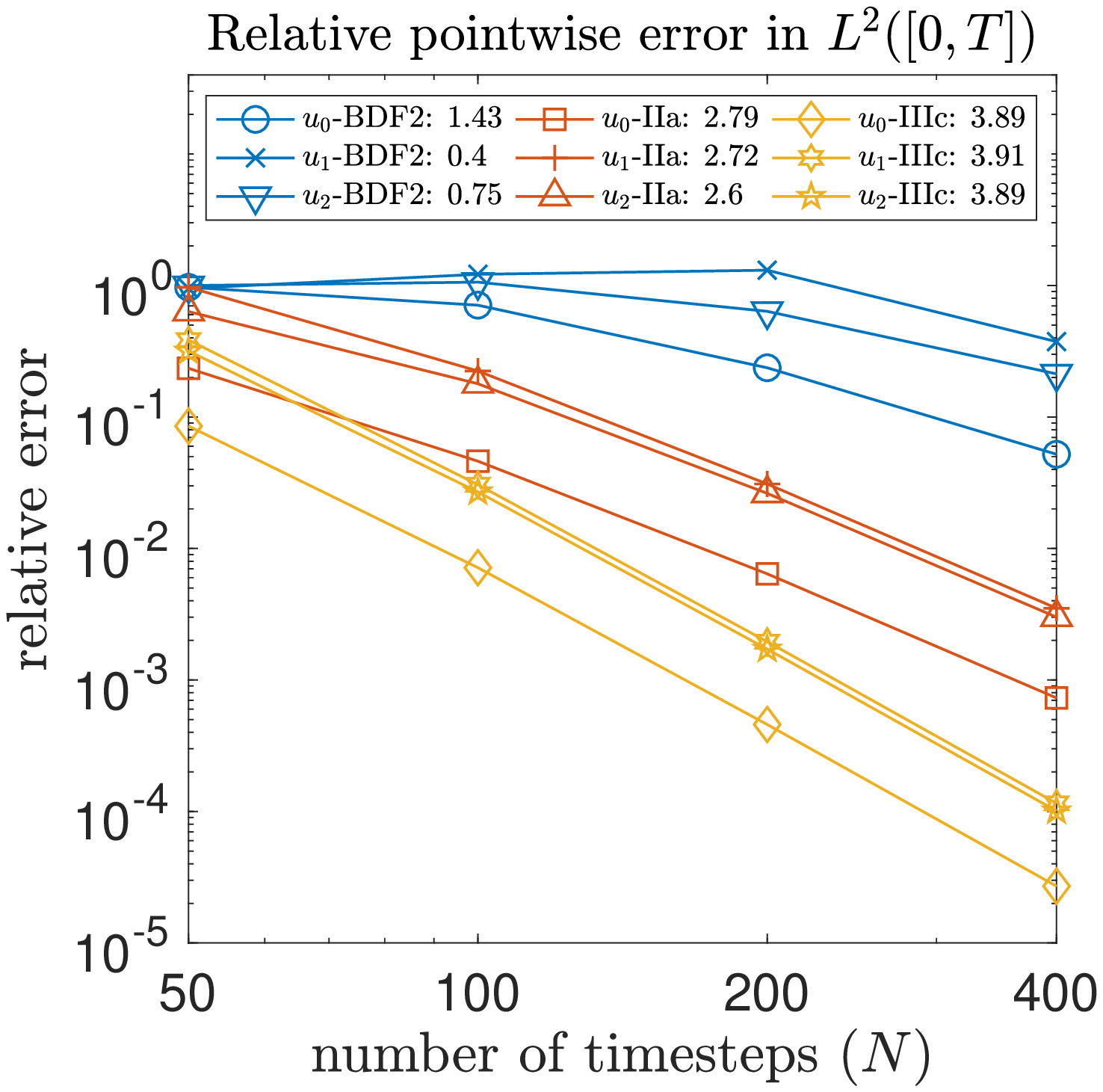}	
		\caption{Field errors} \label{fig:error_U_kite}
	\end{subfigure}
	\caption{Relative errors for the two subdomains case in a kite-shaped domain, measured with respect to a highly resolved solution. Errors are computed as  explained in \eqref{eq:trace_error} and \eqref{eq:errorU}. Legends indicate estimated order of convergence obtained by means of least squares fittings.}
	\label{fig:num_sol3}
\end{figure}

\begin{figure}[t]
	\centering
	\begin{tabular}{ccc}
		\includegraphics[width=0.15\linewidth]{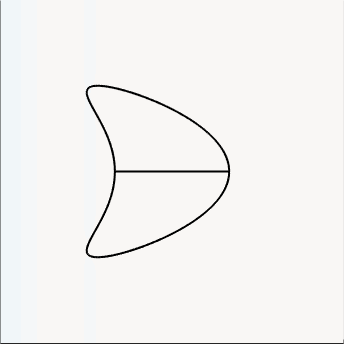}	& \includegraphics[width=0.15\linewidth]{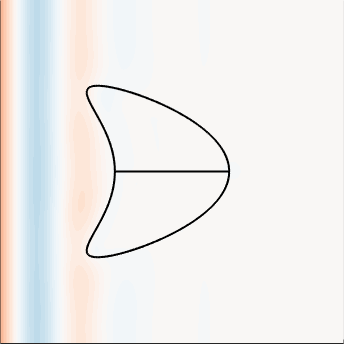} & \includegraphics[width=0.15\linewidth]{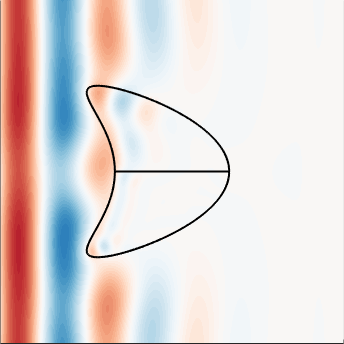} \\ 
		\includegraphics[width=0.15\linewidth]{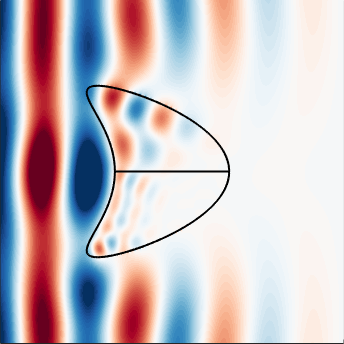}	& \includegraphics[width=0.15\linewidth]{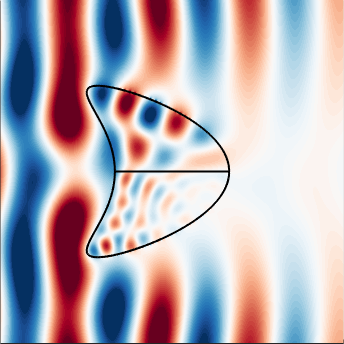} & \includegraphics[width=0.15\linewidth]{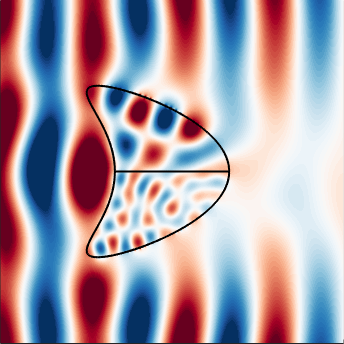} \\ 
		\includegraphics[width=0.15\linewidth]{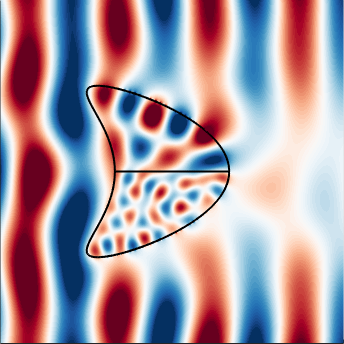}	& \includegraphics[width=0.15\linewidth]{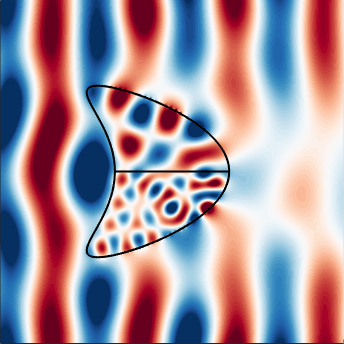} &  \includegraphics[width=0.15\linewidth]{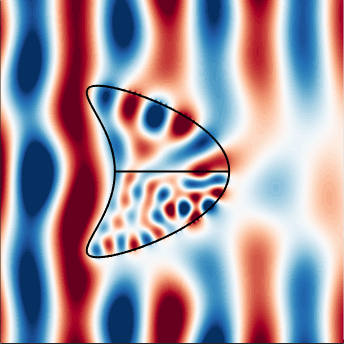}\\ 
	\end{tabular} 
	\caption{Snapshots of the computed field from problem in Section \ref{sec:example3} for times 
		$$ t = 0.5,\ 1.5, \ 2.5, \ 3.5, \ 4.5, \ 5.5, \ 6.5, \ 7.5, \ 8.5. $$}
	\label{fig:plot_kite}
\end{figure}

\section{Concluding remarks}
We solve acoustic wave transmission problems in 2D over composite scatterers by means of CQ methods coupled to a non-conforming spectral Galerkin discretization in space. We showed that Runge-Kutta CQ constitutes an efficient and accurate method, preferable to multistep methods due to their higher order convergence rates. Although reduced order can be expected, this was only observed for the convergence of Neumann traces. For every example, convergence of two-stage Radau and three-stage Lobatto coincides for Neumann traces, which can be explained with the fact that both methods have exactly the same stage order $ q = 2 $. 

The use of high-order spatial discretizations for frequency domain problems is mandatory in order to achieve accurate results and take advantage of the capabilities of Runge-Kutta methods. This is achieved in space via a non-conforming spectral Galerkin discretization. Chebyshev polynomials are also suitable functions for the framework of piecewise/broken Sobolev spaces defined over the boundary of each subdomain.

Future work will be focused on developing a convergence theory for this formulation. Although very important advances have been made in previous works in \cite{hassell2017new,qiu2016time,qiu2016costabel,rieder2017convolution}, the current theoretical framework is well suited only for Galerkin-BEM on standard Sobolev spaces over the boundary of Lipschitz domains. Unfortunately, it is not clear how to extend this to piecewise or broken spaces considered for trial and test functions in the MTF, or even the Petrov-Galerkin formulation. 

\section*{Acknowledgments}

This research was funded by FONDECYT Regular 1171491.

\bibliographystyle{abbrv}      
\bibliography{References2}   

\end{document}